\documentclass[12pt,reqno]{amsart}
\usepackage[usenames]{color}
\usepackage{amsmath,pdfsync,verbatim,graphicx,epstopdf,enumerate}
\usepackage[colorlinks=true]{hyperref}
\hypersetup{urlcolor=blue, citecolor=red}
\pretolerance=4000
\setlength{\topmargin}{-.25in}
\setlength{\textheight}{9in}
\setlength{\textwidth}{7in}
\setlength{\headheight}{26pt}
\setlength{\headsep}{2pt}
\setlength{\oddsidemargin}{-0.25in}
\setlength{\evensidemargin}{-0.25in}
\numberwithin{equation}{section}

\newcommand{\I}{{i}}
\newcommand{\D}{\mathrm{d}}

\newcommand{\wh}{\widehat}

\newcommand{\lb}{\left(}
\newcommand{\Lm}{\left\lvert}
\newcommand{\Rm}{\right\rvert}
\newcommand{\vp}{\varphi}
\newcommand{\ve}{\varepsilon}

\newcommand{\rb}{\right)}

\newcommand{\wt}{\widetilde}

\newcommand{\Dc}{\mathcal{D}}

\newcommand{\Lc}{\mathcal{L}}
\newcommand{\Nc}{\mathcal{N}}
\newcommand{\Oc}{\mathcal{O}}

\newcommand{\Cb}{\mathbb{C}}

\newcommand{\Rb}{\mathbb{R}}

\newcommand{\Beq}{\begin{equation}}
\newcommand{\Eeq}{\end{equation}}
\newcommand{\beq}{\begin{equation*}}
\newcommand{\eeq}{\end{equation*}}
\newcommand{\bal}{\begin{align}}
\newcommand{\eal}{\end{align}}
\renewcommand{\O}{\Omega}

\newcommand{\LM}{\left\lVert}
\newcommand{\RM}{\right\rVert}

\newcommand{\A}{\alpha}
\newcommand{\B}{\beta}
\newcommand{\bp}{\begin{prob}}
\newcommand{\ep}{\end{prob}}
\newcommand{\bpr}{\begin{proof}}
\newcommand{\epr}{\end{proof}}
\renewcommand{\o}{\omega}

\newcommand{\bel}[1]{\begin{equation}\label{#1}}
\newcommand{\ee}{\end{equation}}

\newcommand{\ssubset}{\subset\joinrel\subset}

\newtheorem{theorem}{Theorem}[section]
\newtheorem{corollary}[theorem]{Corollary}

\newtheorem{proposition}[theorem]{Proposition}

\theoremstyle{definition}
\newtheorem{definition}[theorem]{Definition}
\newtheorem{remark}[theorem]{Remark}

\title[Inverse Problem For The Polyharmonic Operator]{Inverse Boundary Value Problem of Determining Up to a Second Order Tensor Appear in the Lower Order Perturbation of a Polyharmonic Operator}
\author[Bhattacharyya and Ghosh]{Sombuddha Bhattacharyya$^\ast$ and Tuhin Ghosh$^{\ast\ast}$}
\address {$^{\ast}$Centre for Applicable Mathematics, Tata Institute of Fundamental Research,
\newline
\indent\: 
E-mail:{\tt\  arkatifr@gmail.com}}
\address{$^{\ast\ast}$ Institute for Advanced Study,  Hong Kong University of Science and Technology.
\newline
\indent\: \
E-mail:{\tt \  iasghosh@ust.hk}}

\begin{document}

\begin{abstract}

We consider the following perturbed polyharmonic operator $\Lc(x,D)$ of order $2m$ defined in a bounded domain $\Omega \subset \mathbb{R}^n, n\geq 3$ with smooth boundary, as
\begin{equation*}
\Lc(x,D) \equiv (-\Delta)^m + \sum_{j,k=1}^{n}A_{jk} D_{j}D_{k} + \sum_{j=1}^{n}B_{j} D_{j} + q(x),
\end{equation*} 
where $A$ is a symmetric $2$-tensor field, $B$ and $q$ are vector field and scalar potential respectively. 
We show that the coefficients $A=[A_{jk}]$, $B=(B_j)$ and $q$ can be recovered from the associated Dirichlet-to-Neumann data on the boundary. Note that, this result shows an example of determining higher order ($2$nd order) symmetric tensor field in the class of inverse boundary value problem. 
\end{abstract}
\subjclass[2010]{Primary 35R30, 31B20, 31B30, 35J40 }
\keywords{Calder\'{o}n problem; Perturbed polyharmonic operator, Second order anisotropic perturbation}

\maketitle
\section{Introduction and statement of the main results}
Let $\Omega\subset\Rb^{n},$ $n\geq 3$ be a bounded domain with smooth connected boundary. 
Let us consider the following perturbed polyharmonic operator $\Lc(x,D)$ of order $2m$, with perturbations up to second order, of the following form: 
\begin{equation}\label{operator}
\Lc(x,D)\equiv
(-\Delta)^m + \sum_{j,k=1}^{n} A_{jk}(x) D_{j}D_{k} + \sum_{j=1}^{n} B_{j}(x) D_{j} + q(x); \quad m\geq 2,
\end{equation}
where $D_j=\frac{1}{i}\partial_{x_j}$, $A \in W^{3,\infty}(\Omega, \mathbb{C}^{n^2})$,  $B \in W^{2,\infty}(\Omega, \mathbb{C}^{n})$ and $q \in L^{\infty}(\Omega, \mathbb{C})$. For simplicity, we call 
\[A_\alpha :\Omega \mapsto \mathbb{C}^{n^{|\alpha|}}, \mbox{ as } A_\alpha= A\mbox{ whenever }|\alpha|=2, \mbox{ and } \ A_\alpha = B \mbox{ whenever }|\alpha|=1.\]  
Consider the domain of this operator to be 
\[
\Dc(\Lc(x,D)) = \Big{\{}u\in H^{2m}(\Omega) \mid u|_{\partial\Omega}=(-\Delta)u|_{\partial\Omega}=\cdots=(-\Delta)^{m-1}u|_{\partial\Omega} =0\Big{\}}.
\]
The operator $\Lc(x,D)$ with the domain $\Dc(\Lc(x,D))$ is an unbounded closed operator on $L^{2}(\O)$ with a purely discrete spectrum \cite{Gerd_Grubb}.
We make the assumption that $0$ is not an eigenvalue of the operator $\Lc(x,D) : \Dc(\Lc(x,D)) \to L^2(\O)$. Let us denote
\[ \gamma u = \Big{(} u |_{\partial\Omega},\cdots,(-\Delta)^k u |_{\partial\Omega},\cdots, (-\Delta)^{m-1}u |_{\partial\Omega}\Big{)},\]
then for any $ f = (f_0,f_1,..,f_{m-1}) \in \prod_{i=0}^{m-1} H^{2m-2i-\frac{1}{2}}(\partial\Omega)$, the boundary value problem, 
\begin{equation}\begin{aligned}\label{problem}
\Lc(x,D)u &= 0\quad \mbox{ in }\Omega \\
\gamma u&= f \quad \mbox{ on }\partial\Omega
\end{aligned}
\end{equation}
has a unique solution $u\in H^{2m}(\Omega)$.

Let us define the corresponding Neumann trace operator $\gamma^{\#} $ by
\[
\gamma^{\#} u = \Big{(}\partial_{\nu} u |_{\partial\Omega},\cdots,\partial_{\nu}(-\Delta)^k u |_{\partial\Omega},\cdots,\partial_{\nu} (-\Delta)^{m-1}u |_{\partial\Omega}\Big{)}
\]
where $\nu$ is the outer unit normal to the boundary $\partial\Omega$,
and the corresponding Dirichlet-to-Neumann map (D-N map) is given by,
\begin{equation}\label{dnmap}\begin{aligned}
 &\Nc : \prod_{i=0}^{m-1} H^{2m-2i-\frac{1}{2}}(\partial\Omega) \to \prod_{i=0}^{m-1} H^{2m-2i-\frac{3}{2}}(\partial\Omega)\\
 &\Nc(f) = \gamma^{\#} u = \Big{(}\partial_{\nu} u |_{\partial\Omega},\cdots,\partial_{\nu}(-\Delta)^k u |_{\partial\Omega},\cdots,\partial_{\nu} (-\Delta)^{m-1}u |_{\partial\Omega}\Big{)}
 \end{aligned}
 \end{equation}
where $u\in H^{2m}(\Omega)$ is the solution of \eqref{problem}. We also define the Cauchy data set as the graph of the D-N map
\begin{equation}\label{qw19} \mathcal{C}^N= \Big{(}u |_{\partial\Omega},\cdots, (-\Delta)^{m-1}u |_{\partial\Omega}, \partial_{\nu} u |_{\partial\Omega},\cdots,\partial_{\nu} (-\Delta)^{m-1}u |_{\partial\Omega} \Big{)}
\end{equation}
Equations of this kind has its own resemblance in the study of continuum mechanics of buckling problems \cite{Ashbaugh_Buckling} and in the related field of elasticity \cite{Polyharmonic_Book}.

Inverse problems concerning higher order operators is an active field of research in recent days. It addresses the recovery of the coefficient from measurements taken on full or on a part of the boundary \cite{IKE,ISA,KRU2,KRU1,VT,TS,AYY,Bhattacharyya2017}, lower regularity of the coefficients \cite{KU,AY,KIY}, stability issues \cite{VA,APH} from the boundary Dirichlet-Neumann data. In this paper we are interested in the recovery of the coefficients in \eqref{operator} from the boundary Dirichlet-Neumann data. To be specific, we are interested in the recovery of the second order tensor or matrix $A_\A$ for $|\A|=2$ associated with the second order perturbed term in the operator $\mathcal{L}(x,D)$. Recovery of lower order perturbations mainly zeroth order and first order, of biharmonic and polyharmonic operators has been considered in prior works
\cite{IKE, ISA, KRU1,KRU2,YANG,SEROV, SEROV2}. In \cite{IKE, ISA}, recovery of the zeroth order perturbations of the biharmonic operator was studied, and recently in \cite{KRU1, KRU2} recovery of the first and zeroth order perturbations of the biharmonic operator with partial boundary Neumann data and of the polyharmonic operator with full boundary Neumann data were considered. Similar work on non polyharmonic higher order elliptic operator $(\sum_{i=1}^n D^{2m}_{x_i},\, m\geq 2 )$ has been also considered in \cite{TS}. A natural question to ask is whether higher order perturbations of the polyharmonic operator can be recovered from boundary Dirichlet-Neumann map. Such attempted was initiated in \cite{VT}, where the authors had considered the higher order perturbations (upto $m^{\textrm{th}}$ order) of polyharmonic operator $(-\Delta)^m$, but the coefficients attached to those lower order perturbed terms were restricted among the class of first order and zeroth order tensors accordingly to odd and even order perturbations. So, the question becomes more interesting when one involves higher order tensor with the higher order perturbations.  In this paper, we show that if we consider a perturbation of the polyharmonic operator $(-\Delta)^{m}$ of the form \eqref{operator}, with $m>2$ then all the coefficients $A,B,q$ can be recovered from the boundary Dirichlet-Neumann data. For $m=2$, we recover $A$ as an isotropic matrix along with $B$ and $q$, this can be seen in \cite{VT}. In other words, this article provides an example of recovery of matrix in this class of inverse boundary value problems.

For the biharmonic operator that for being $m=2$, one may have gauge invariance in the recovery of the matrix coefficient. For instance, let us consider the following fourth order generalization of the inverse problem associated with the magnetic Schr\"{o}dinger operator \cite{SUN}, as   $u\in H^4(\Omega)$ solves  
\begin{equation}\label{g-eq}
 \sum_{k,l=1}^n(D_{k}D_{l} + A_{kl})^2u + qu = 0 \mbox{ in }\Omega,
\end{equation}
with the given Navier boundary condition $(u|_{\partial\Omega}, (-\Delta u)|_{\partial\Omega})\in H^{\frac{7}{2}}(\partial\Omega)\times H^{\frac{5}{2}}(\partial\Omega)$.\\
Note that, \eqref{g-eq} can be written in the form of \eqref{operator} as 
\[ (-\Delta)^2u + \sum_{k,l}A^{\prime}_{kl}D_{k}D_{l}u+\sum_jB^\prime_j D_ju + q^\prime u =0\] 
for some $A^\prime, B^\prime$ and $q^\prime$ as a function of $A,q$.

Then for any $\varphi\in C^4(\Omega)$ with \textit{supp of} $\varphi \ssubset \Omega$, we observe  $ue^\varphi\in H^4(\Omega)$ solves 
\begin{equation}
\sum_{k,l=1}^n\lb D_{k}D_{l} + A_{kl}+\frac{\partial^2\varphi}{\partial x_k\partial x_l}+\frac{\partial\varphi}{\partial x_k}\frac{\partial\varphi}{\partial x_l}\rb^2u + qu = 0 \mbox{ in }\Omega.
\end{equation}
Since $\mbox{supp}\, (\varphi) \subset \Omega$, then $(ue^\varphi, (-\Delta(ue^\varphi)))$ and the normal derivative $(\partial_\nu (ue^\varphi), \partial_\nu(-\Delta(ue^\varphi)))$ carries the same boundary value as $(u,(-\Delta)u)$ does. Therefore the DN map \eqref{dnmap} remains unchanged, i.e. $\mathcal{N}_{A,q}= \mathcal{N}_{\widetilde{A},q}$, where $\widetilde{A}$ equals to its gauge invariance 
\begin{equation}\label{gauge}
\widetilde{A}_{kl}= A_{kl}+\frac{\partial^2\varphi}{\partial x_k\partial x_l}+\frac{\partial\varphi}{\partial x_k}\frac{\partial\varphi}{\partial x_l}. 
\end{equation}

This stands as an obstacle to the uniqueness of $A$. However, if we consider $A_{ij}$ to be an isotropic matrix, i.e. $A_{ij}(x)=a(x)\delta_{ij}$ for some scalar function $a$ and $\delta_{jk}$ denotes the Kronecker delta functions, then full recovery of that isotropic matrix is possible and was first explored in \cite{VT}. 
We also note that the gauge invariance $(\widetilde{A}-A)$ in \eqref{gauge} is not governed by the $2$ form $d_sV$ where $V$ is a $1$-tensor field and $d_sV$ is the symmetrized inner derivative defined as \[ (d_sV)_{jk} = \frac{1}{2}\left( \frac{\partial }{\partial x_j}V_k + \frac{\partial }{\partial x_k}V_j \right). \]
This follows from the fact that $(\widetilde{A}-A)$ does not belong to the kernel of the corresponding \textit{Saint Venant operator} $W$ (see \cite[Theorem 2.2.1]{VS}) defined as 
\[W_{ijkl}(F) = \frac{\partial^2 F_{ij}}{\partial x_k \partial x_l} + 
\frac{\partial^2 F_{kl}}{\partial x_i \partial x_j} - \frac{\partial^2 F_{il}}{\partial x_j \partial x_k} -\frac{\partial^2 F_{jk}}{\partial x_i \partial x_l}, \]
where $F$ is a symmetric $2$ tensor field.
In our case one can get $\phi \in C^4_c(\mathbb{R}^n)$ such that
\begin{equation}
W(\widetilde{A}-A) = W(\frac{\partial^2\varphi}{\partial x_k\partial x_l}+\frac{\partial\varphi}{\partial x_k}\frac{\partial\varphi}{\partial x_l}) \neq 0.
\end{equation}

Whereas, in the case of the magnetic Schr\"{o}dinger operator $(D_k +A_k)^2 +q$, the gauge invariance of the vector field $(\widetilde{A}-A)$ with the same boundary data is governed by the $1$ form $dV$ where $V$ is a scalar field, see \cite{SUN, SALOT}.

This article is a natural extension of the results obtained in \cite{VT,KRU1,KRU2} which considered the recovery of only zeroth and first order tensor from the lower order perturbations. 
To the best of the authors knowledge, inverse problems involving upto higher order perturbations of polyharmonic operator which are essentially determined by higher order tensors has not been investigated in previous studies. The novelty of this work provides a fresh look in order to determine the tensors of order more than one, 	which we don't see often. 

Here we take the opportunity to mention that inverse problems with boundary information arise naturally in several imaging applications including seismic and medical imaging, electrical impedance tomography to name a few. The techniques we rely on to prove our main result are based on the pioneering works done for inverse boundary value problems involving Schr\"{o}dinger operators \cite{Calderon_Paper,SU,BUK,KEN,DOS}.

Now we state the main result of this article. 

\begin{theorem}\label{mainresult}
Let $\Omega\subset \mathbb{R}^n, n\geq 3 $ be a bounded domain with smooth connected boundary. Let $\Lc(x,D)$ and $\widetilde{\Lc}(x,D)$ be two operators defined as in \eqref{operator} having $m\geq 2$ with the coefficients $A_\A, \widetilde{A}_\A \in W^{1+\lvert\A\rvert,\infty}(\mathbb{R}^n,\mathbb{C}^{n^{|\alpha|}}) \cap \mathcal{E}^{\prime}(\overline{\Omega})$, $\lvert \alpha \rvert =1,2$ and $q,\widetilde{q} \in L^{\infty}(\Omega,\mathbb{C})$. We assume that for $m=2$, $A_\alpha, \widetilde{A}_\A$ with $|\alpha|=2$ are isotropic matrix. Let us assume that $0$ is not an eigenvalue of the operators $\Lc$, $\widetilde{\Lc}$ and $\Nc$, $\widetilde{\Nc}$ are the corresponding Dirichlet-to-Neumann maps respectively. If 
\begin{equation*}\label{Neumann Data}
\begin{aligned}
&\Nc(f)|_{\partial\Omega} = \widetilde{\Nc}(f)|_{\partial\Omega} \quad \mbox{ for all } f \in \prod_{i=0}^{m-1} H^{2m-2i-\frac{1}{2}}(\partial\Omega),
\end{aligned}
\end{equation*}
then 
\[
A_{\A} = \widetilde{A}_{\A},\quad\mbox{for } \lvert \alpha \rvert = 1,2; \quad\mbox{and}\quad q = \widetilde{q}, \quad \mbox{in }\Omega.
\]
\end{theorem}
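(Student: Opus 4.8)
The plan is to run the standard Calder\'on-type machinery --- an integral identity extracted from $\Nc=\wt\Nc$, tested against complex geometric optics (CGO) solutions --- with the new features being how the full second order tensor is read off and the role of $m>2$. Given boundary data $f$, let $u_1\in H^{2m}(\Omega)$ solve $\Lc u_1=0$ with $\gamma u_1=f$, and let $u_2$ solve $\wt\Lc u_2=0$ with $\gamma u_2=f$; since $\Nc(f)=\wt\Nc(f)$, the difference $w=u_1-u_2$ has vanishing Cauchy data \eqref{qw19}, while $\wt\Lc w=(\wt\Lc-\Lc)u_1$. Pairing with any solution $v$ of the formal adjoint equation $\wt\Lc^{\,*}v=0$ and integrating by parts (all boundary terms drop, as $w$ has zero Cauchy data) yields
\begin{equation}\label{plan-identity}
\int_\Omega\Big(\sum_{j,k}(A_{jk}-\wt A_{jk})D_jD_ku_1+\sum_j(B_j-\wt B_j)D_ju_1+(q-\wt q)u_1\Big)\overline{v}\,dx=0
\end{equation}
for all $u_1$ with $\Lc u_1=0$ and all $v$ with $\wt\Lc^{\,*}v=0$; since $A-\wt A,\,B-\wt B\in\Ec'(\overline{\Omega})$ we may use test functions defined on $\Rb^n$.

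For the CGO solutions, fix $\xi\in\Rb^n$ and, using $n\geq3$, pick orthogonal unit vectors $e_1,e_2\perp\xi$. For $\tau>\tfrac12|\xi|$ put $\rho_1=-\tfrac i2\xi+\tau e_1+i\sqrt{\tau^2-\tfrac14|\xi|^2}\,e_2$ and $\rho_2=-\tfrac i2\xi-\tau e_1-i\sqrt{\tau^2-\tfrac14|\xi|^2}\,e_2$, so that $\rho_1\cdot\rho_1=\rho_2\cdot\rho_2=0$, $|\rho_j|\sim\tau$, $\rho_1+\overline{\rho_2}=-i\xi$, and $\rho_1/\tau\to e_1+ie_2$. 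A Carleman estimate for $e^{-\rho\cdot x}(-\Delta)^me^{\rho\cdot x}$ --- into which the perturbation, of order $\leq2<2m$, can be absorbed --- yields $u_1=e^{\rho_1\cdot x}(a_1+r_1)$ and $v=e^{\rho_2\cdot x}(a_2+r_2)$, where $a_1,a_2$ may be polynomials of degree $\leq m-1$ (each solves the leading transport equation $(\rho_j\cdot\nabla)^m a_j=0$ identically, a freedom that is genuinely larger when $m>2$) and $\|r_j\|_{H^{k}_\tau(\Omega)}=O(\tau^{-1})$ in the relevant semiclassical norms.

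Substituting the CGO pair into \eqref{plan-identity}, the top order part is $O(\tau^2)$ and comes only from the $A$-term via $D_jD_ku_1=e^{\rho_1\cdot x}(-\rho_{1,j}\rho_{1,k}a_1+O(\tau))$. With $a_1=a_2=1$, dividing by $\tau^2$ and sending $\tau\to\infty$ (the remainders and all lower order terms being $o(\tau^2)$) gives $\sum_{j,k}(e_1+ie_2)_j(e_1+ie_2)_k\,\widehat{(A-\wt A)}_{jk}(\xi)=0$ for every admissible $e_1,e_2$; separating the real and imaginary parts and varying $e_1,e_2$ in $\xi^{\perp}$ forces $\widehat{(A-\wt A)}(\xi)$, restricted to $\xi^{\perp}\times\xi^{\perp}$, to be a scalar multiple of the identity. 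For $m=2$ this is the best possible --- the gauge \eqref{gauge} obstructs everything else --- and the isotropy hypothesis reduces the residual ambiguity to one scalar, recovered together with $B,q$ below (cf.\ \cite{VT}). For $m>2$ the gauge \eqref{gauge} is unavailable (conjugation by $e^{\varphi}$ does not preserve the form \eqref{operator}); one then revisits \eqref{plan-identity} with affine and, if needed, higher degree amplitudes of degree $\leq m-1$, and, after subtracting the already-known $\tau^2$ contribution, reads off from the $\tau^1$ and $\tau^0$ terms the mixed components $\langle\widehat{(A-\wt A)}(\xi)\,\widehat{\xi},e\rangle$ ($e\perp\xi$) and $\langle\widehat{(A-\wt A)}(\xi)\,\widehat{\xi},\widehat{\xi}\rangle$, so that $\widehat{(A-\wt A)}\equiv0$ and $A=\wt A$. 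With $A=\wt A$ the $A$-term drops from \eqref{plan-identity}, the leading surviving term is the $O(\tau)$ part of the $B$-term, and dividing by $\tau$ gives $\sum_j(e_1+ie_2)_j\,\widehat{(B-\wt B)}_j(\xi)=0$, hence $B=\wt B$; finally \eqref{plan-identity} collapses to $\int_\Omega(q-\wt q)u_1\overline{v}\,dx=0$, and letting $\tau\to\infty$ gives $\widehat{(q-\wt q)}(\xi)=0$, i.e.\ $q=\wt q$.

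The main obstacle is twofold. First, the gauge \eqref{gauge}: for $m=2$ it is a genuine obstruction (hence the isotropy assumption), whereas for $m>2$ one must confirm that no analogous gauge survives and, more importantly, turn this into an actual recovery of the ``longitudinal'' components of $A$ (those involving the $\xi$-direction) --- the delicate point, requiring the extra amplitude freedom of the polyharmonic structure together with careful bookkeeping of the $\tau$-expansion so that at each stage the already-recovered coefficients cancel the higher order terms and isolate the next one. Second, the remainder estimates: since the perturbation has order two, the largest compatible with a principal part of order $2m\geq4$, the Carleman/solvability estimate must gain enough powers of $\tau$ that $r_1,r_2$, and with them every error term, stay strictly below the order at which each coefficient is being read off.
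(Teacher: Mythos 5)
Your overall framework (the integral identity from equal D--N maps, CGO solutions with linear phases and polynomial amplitudes, Fourier inversion order by order in the large parameter) is the same as the paper's, and your treatment of the transversal part of $A-\wt A$ at top order, of $B-\wt B$ once $A=\wt A$, of $q-\wt q$ at the end, and of the isotropic residue for $m=2$ via \cite{VT}, all match the paper. The genuine gap is exactly at the point you yourself flag as ``the delicate point'': the recovery of the longitudinal components $\langle\widehat{(A-\wt A)}(\xi)\,\xi,\cdot\rangle$ for $m>2$. You propose to read these off from the $\tau^1$ and $\tau^0$ coefficients of the expansion. This does not work as stated: at order $\tau^1$ the quantity $\langle\widehat{(A-\wt A)}(\xi)\,\xi,e_1+ie_2\rangle$ enters inseparably mixed with the unknown $\widehat{(B-\wt B)}(\xi)\cdot(e_1+ie_2)$ and with terms of the form $\int(A_\A-\wt A_\A)(e_1+ie_2)_{\A_1}(e_1+ie_2)_{\A_2}(\wt{a_0}\overline{a_1}+\wt{a_1}\overline{a_0})e^{-ix\cdot\xi}\,dx$ coming from the subleading amplitudes, which are themselves nonlocal functionals of $A_\A$ and $\wt A_\A$ through the inhomogeneous transport equations \eqref{qw11}. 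Varying $e_1,e_2$ in $\xi^{\perp}$ cannot disentangle these; this entanglement is precisely the shadow of the gauge phenomenon \eqref{gauge}, and the remark that ``no analogous gauge survives for $m>2$'' is a heuristic, not a separation mechanism.

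The paper resolves this at the \emph{top} order in $\tau$, not at lower orders. Concretely: after extending everything to a larger simply connected $\wt\O$, decompose $A-\wt A=F+d_sV$ with $F$ solenoidal and $V|_{\partial\wt\O}=0$ (Sharafutdinov). The $d_sV$ part drops out of the top-order identity with $a_0=1$, $\wt{a_0}=e^{-ix\cdot\xi}$ because $\xi\perp\o,\wt\o$; your transversal argument combined with $\hat F(\xi)\xi=0$ then forces $\hat F(\xi)=d(\xi)\bigl(I-\xi\otimes\xi/|\xi|^2\bigr)$, and a Riesz-transform computation shows such an $F$ is itself isotropic plus a symmetrized gradient, so that $A-\wt A=d_{\#}I+d_s\wt V$ with $\wt V|_{\partial\wt\O}=0$. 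The potential part $\wt V$ is then annihilated by returning to the \emph{same} $\tau^2$-order identity but with linear and quadratic amplitudes ($a_0$ proportional to $\omega'\cdot x$ and $(\omega'\cdot x)^2$), which solve the homogeneous transport equation $T^m a_0=0$ only when $m>2$; this, rather than the $\tau^1$ and $\tau^0$ coefficients, is where the extra amplitude freedom of the polyharmonic structure is actually spent. Without this (or an equivalent) mechanism your argument determines $A-\wt A$ only up to an isotropic term plus a symmetrized gradient, i.e.\ it does not close for $m>2$.
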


\begin{remark}
The result for $m=2$ in this Theorem \ref{mainresult} with the assumption the second order perturbations are governed by isotropic matrices, follows from \cite[Theorem 1.1]{VT}.    
\end{remark}

Finally let us consider the same problem \eqref{problem} given with the 
Dirichlet boundary conditions instead of Navier boundary conditions. 
Let us denote 
\[ \gamma_Du =  \Big{(} u |_{\partial\Omega},\partial_\nu u|_{\partial\Omega},\cdots,\partial_\nu^k u |_{\partial\Omega},\cdots, \partial_\nu^{m-1}u |_{\partial\Omega}\Big{)}\]
Then for $f= (f_0,f_1,\cdots,f_{m-1}) \in \prod_{i=0}^{m-1} H^{2m-i-\frac{1}{2}}(\partial\Omega)$ we consider the boundary value problem 
\begin{equation}\begin{aligned}\label{dirichlet}
\mathcal{L}(x,D)u &= 0 \quad\mbox{ in }\Omega \\
\gamma_Du &= f \quad\mbox{ on }\partial\Omega. 
\end{aligned}
\end{equation}
The corresponding Neumann trace is 
\[\gamma_D^{\#} = \Big{(}\partial_{\nu}^{m}u|_{\partial\Omega},\cdots, \partial_{\nu}^{2m-1}u|_{\partial\Omega}\Big{)} \in\prod_{i=m}^{2m-1} H^{2m-i-\frac{1}{2}}(\partial\Omega)\]
where $u\in H^{2m}(\Omega)$ is the solution to the Dirichlet problem \eqref{dirichlet}. See \cite{AGM,Gerd_Grubb} for the wellposedness of the forward problem \eqref{dirichlet}.
We introduce the set of Cauchy data for the operator $\mathcal{L}(x,D)$ with the Dirichlet 
boundary condition by
\begin{equation*}
\mathcal{C}^D = \Big{(}u|_{\partial\Omega},\ldots,\partial^{m-1}_{\nu}u|_{\partial\Omega},\partial^m_{\nu}u|_{\partial\Omega},\ldots,\partial^{2m-2}_{\nu}u|_{\partial\Omega},\partial_{\nu}^{2m-1}u|_{\partial\Omega}\Big{)}
\end{equation*}
where $u\in H^{2m}(\Omega)$ solving \eqref{dirichlet}. We have this following result: 
\begin{corollary}
We assume $m\geq 2$ and  $\A=1,2$, $A_{\A},\widetilde{A}_{\A}$ and $q,\widetilde{q}$
satisfy the same conditions as in Theorem \ref{mainresult}. 
Then $\mathcal{C}^D= \widetilde{\mathcal{C}}^D$ implies that for each $\lvert\A\rvert =1,2$, $A_{\A} = {\wt{A}_\A}$ and 
$q = \widetilde{q}$ in $\Omega$. 
Proceeding in a similar way as of Theorem \ref{mainresult}, in this case we end up with an integral identity same as \eqref{integralidentityE}. 
Then following the same analysis one can show uniqueness of the lower order perturbations in $\Omega$.
\end{corollary}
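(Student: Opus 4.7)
The plan is to reduce the corollary to the same integral identity that drives the proof of Theorem \ref{mainresult} and then reuse the CGO-based extraction of coefficients verbatim. Only the step that passes from the boundary condition to an interior orthogonality relation needs to be reworked for Dirichlet Cauchy data.

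First, given any $u\in H^{2m}(\Omega)$ with $\mathcal{L}u=0$, I would use well-posedness of the Dirichlet problem for $\widetilde{\mathcal{L}}$ (guaranteed since $0$ is not an eigenvalue, cf.\ \cite{AGM,Gerd_Grubb}) to produce the unique $\widetilde{u}\in H^{2m}(\Omega)$ solving $\widetilde{\mathcal{L}}\widetilde{u}=0$ with $\gamma_D\widetilde{u}=\gamma_D u$. The hypothesis $\mathcal{C}^D=\widetilde{\mathcal{C}}^D$ forces the full Cauchy datum of $u$ to be realised by some solution of $\widetilde{\mathcal{L}}$, and Dirichlet uniqueness identifies that solution with $\widetilde{u}$, so also $\gamma_D^{\#}\widetilde{u}=\gamma_D^{\#}u$. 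Setting $w:=u-\widetilde{u}$, one obtains $\partial_{\nu}^{j}w|_{\partial\Omega}=0$ for every $j=0,1,\ldots,2m-1$.

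For any solution $v\in H^{2m}(\Omega)$ of the adjoint equation $\widetilde{\mathcal{L}}^{*}v=0$, repeated integration by parts using a Green's formula adapted to $\widetilde{\mathcal{L}}$ yields
\[
\int_{\Omega}\widetilde{\mathcal{L}}w\,\overline{v}\,dx=\int_{\Omega}w\,\overline{\widetilde{\mathcal{L}}^{*}v}\,dx=0,
\]
since every boundary contribution pairs a trace of $w$ or of some $\partial_\nu^{j}w$ with $j\le 2m-1$ against a trace of $v$, and all such traces of $w$ vanish by the previous step. Since $\mathcal{L}u=0=\widetilde{\mathcal{L}}\widetilde{u}$, this rearranges to
\[
\int_{\Omega}(\widetilde{\mathcal{L}}-\mathcal{L})u\,\overline{v}\,dx=0,
\]
which is precisely the Dirichlet analogue of the integral identity \eqref{integralidentityE}.

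From here I would import the CGO construction from the proof of Theorem \ref{mainresult} unchanged. Plugging in CGO solutions of $\mathcal{L}u=0$ and $\widetilde{\mathcal{L}}^{*}v=0$ with linear complex phases of increasing magnitude, expanding in powers of the large parameter and isolating the leading term recovers the Fourier transform of $\widetilde{A}-A$ on an appropriate frequency set; for $m\ge 3$ this gives $\widetilde{A}=A$ as a full symmetric $2$-tensor, while for $m=2$ the isotropy hypothesis is used exactly as in \cite{VT}. The subsequent orders of the asymptotic expansion yield $\widetilde{B}=B$ and finally $\widetilde{q}=q$.

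The main obstacle I anticipate is the careful bookkeeping of boundary terms in the Green's formula for $\widetilde{\mathcal{L}}$ rather than for $(-\Delta)^m$ alone: the polyharmonic part contributes $2m$ boundary pairings involving Laplacian-type traces of $w$, and the second order perturbation $\sum A_{jk}D_{j}D_{k}$ adds further traces of $w$ of orders up to $2m-2$. The vanishing $\partial_\nu^{j}w|_{\partial\Omega}=0$ for $j=0,\ldots,2m-1$ is more than enough to kill every such term—locally $w$ vanishes to order $2m$ in the normal direction, so all traces of derivatives of $w$ of order at most $2m-1$ vanish—but verifying this cleanly, rather than simply invoking the Navier argument, is the one place where genuine care is required.
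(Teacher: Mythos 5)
Your proposal matches the route the paper (only sketches) for this corollary: $\mathcal{C}^D=\widetilde{\mathcal{C}}^D$ together with uniqueness for the Dirichlet problem gives $w=u-\widetilde{u}\in H_0^{2m}(\Omega)$, the vanishing of $\partial_\nu^j w|_{\partial\Omega}$ for $j=0,\ldots,2m-1$ annihilates every boundary pairing in the Green formula (including the extra ones of order $\le 2m-2$ coming from $\sum A_{jk}D_jD_k$), and one lands on the integral identity \eqref{integralidentityE}, from which the CGO and Fourier arguments carry over verbatim. The only step you gloss over — and the paper's sketch does too — is that, as in the proof of Theorem \ref{mainresult}, one should first transfer the Cauchy-data equality to a larger simply connected $\widetilde{\Omega}\supset\supset\Omega$ via the Dirichlet analogue of Proposition \ref{qw20}, since the argument later uses simple-connectedness to write $\widetilde V=\nabla p$; this is routine and does not alter the proof.
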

\noindent
This article is organized as follows. 
Section \ref{Carleman Estimates} is devoted to the interior Carleman estimate, which is used to prove the existence of a complex geometric optics (C.G.O.) type solution for the equation \eqref{problem}. 
Finally, in Section \ref{determination} we derive an integral identity involving the perturbations and then prove the main result using Fourier analysis techniques.

\section{Carleman estimate and C.G.O. solutions}\label{Carleman Estimates}

In this section we derive Complex Geometric Optics (CGO) type solutions for the operator $\Lc$ and its formal $L^{2}$ adjoint $\Lc^{*}$ based on Carleman estimate. 

\subsection{Interior Carleman estimates} 
Let us assume that   
\begin{equation}\label{generaloperator}
 \Lc(x,D) =(-\Delta)^m + \sum_{ \lvert \A \rvert = 1}^{2} A_{\A}(x) D^{\A} + q
\end{equation}
where $\A$ is a multi-index, $A_{\A}\in W^{1+\lvert\A\rvert,\infty}(\Omega,\mathbb{C}^{n^{|\alpha|}})$ and $q \in L^{\infty}(\Omega)$.
Note that $\Lc^{*}$ the the $L^2$ adjoint of $\Lc$, also has the same form as that of $\Lc$ with different $A_\alpha$, and $q$. 
We will derive an interior Carleman estimate for the conjugated semiclassical version of the operator $\Lc$ as well as its adjoint operator. 

First we consider the principal part of the conjugated semiclassical version of the perturbed operator $\Lc(x,hD)$, which is given as $(-\Delta)^m$.
Then by adding the lower order terms to it finally we derive the required Carleman estimate for the conjugated semiclassical version of the operator $\Lc(x,D)$.

We start by recalling the definition of a limiting Carleman weight for the semiclassical Laplacian $-h^2\Delta$. 
Let $\Omega\subset \mathbb{R}^n$, $n\geq 3$ be a bounded domain with smooth boundary. 
Let $\wt{\O}$ be an open set in $\mathbb{R}^n$ such that 
$\Omega\ssubset \wt{\O}$ and $\vp\in C^{\infty}(\wt{\O},\mathbb{R})$.
Consider the conjugated operator 
$P_{0,\vp} = e^{\frac{\vp}{h}}(-h^2\Delta)e^{-\frac{\vp}{h}}$ 
with its semiclassical symbol $p_{0,\vp}(x,\xi)$. 

\begin{definition}[\cite{KEN}]
We say that $\vp$ is a limiting Carleman weight for $(-h^2\Delta)$
in $\wt{\O}$ if $\nabla\vp \neq 0 $ in $\wt{\O}$ and the Poisson 
bracket of $\mathrm{\mathrm{Re}}(p_{0,\vp})$ and  $\mathrm{Im}(p_{0,\vp})$ satisfying
\[\Big{\{}\mathrm{Re}(p_{0,\vp}), \mathrm{Im}(p_{0,\vp})\Big{\}}(x,\xi)=0 \quad \mbox{ whenever } p_{0,\vp}(x,\xi)=0 \mbox{ for } (x,\xi)\in (\overline{\Omega}\times\mathbb{R}^n).
\]
\end{definition}
An example of such $\vp$ is the linear weight defined as 
$\vp(x) = \A\cdot x$, where $\A\in\mathbb{R}^n$ with $|\A|=1$ or 
logarithmic weights $\vp(x)= \log|x-x_0|$ with $x_0 \notin \wt{\O}$.
Throughout this article we consider the limiting Carleman weight to be $\vp(x)=(\A\cdot x)$ where $\A\in\mathbb{R}^n$ with $|\A|=1$.

As the principal symbol of the semiclassical conjugated 
operator $e^{\frac{\vp}{h}}(-h^2\Delta)^{m}e^{-\frac{\vp}{h}}$ 
is given by $p_{0,\vp}^m$ which is not of principal type, the idea of
Carleman weight for polyharmonic operator is irrelevant. 
One always has the Poisson bracket of $\mathrm{Re}(p^m_{\vp})$ and $\mathrm{Im}(p^m_{\vp})$ 
is zero when $p^m_{\vp}(x,\xi)=0$, $(x,\xi)\in (\overline{\Omega}\times\mathbb{R}^n)$.
In order to get the Carleman estimate for the polyharmonic operator we iterate the Carleman estimate for the semiclassical Laplacian.

We use the semiclassical Sobolev spaces $H^s_{\mathrm{scl}}(\mathbb{R}^n)$ with $s\in\mathbb{R}$ equipped with the norm 
\begin{align*}
\lVert u\rVert_{H^s_{\mathrm{scl}}(\mathbb{R}^n)} = \lVert {\langle hD\rangle}^su\rVert_{L^2(\mathbb{R}^n)}
\end{align*} where 
$\langle \xi \rangle = (1+|\xi|^2)^{\frac{1}{2}}$. 

With these notations we now prove the following proposition.
\begin{proposition}\label{Prop: Interior Carleman Estimate}
	Let for each $\A$ with $\lvert \A\rvert = 1,2$, $A_{\A} \in W^{1+\lvert\A\rvert,\infty}(\Omega,\mathbb{C}^{n^{|\alpha|}})$, $q\in L^{\infty}(\O,\Cb)$
	and $\vp$ be a limiting Carleman weight for the semiclassical Laplacian on $\wt{\O}$. Then for $0< h\ll 1 $ and $-2m\leq s\leq 0$, we have
	\begin{equation}\label{carlemanestimate3}
		h^{m}\lVert u\rVert_{H^{s+2m}_{\mathrm{scl}}} \leq C \lVert h^{2m}e^{\frac{\vp}{h}}\Lc(x,D)e^{-\frac{\vp}{h}}u\rVert_{H^{s}_{\mathrm{scl}}},\mbox{ for all } u\in C^{\infty}_0(\Omega).
	\end{equation}
	the constant $C = C_{s,\O, A_{\A},q}$ is independent of $h$. 
\end{proposition}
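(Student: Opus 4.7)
The plan is to prove \eqref{carlemanestimate3} by bootstrapping from the semiclassical Carleman estimate for the Laplacian and then absorbing the lower-order perturbations of $\Lc$ as $h$-small errors.

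I first set $P_{0,\vp}:=e^{\vp/h}(-h^2\Delta)e^{-\vp/h}$, whose semiclassical symbol is $p_{0,\vp}(x,\xi)=|\xi|^2-|\A|^2+2i\A\cdot\xi$ when $\vp(x)=\A\cdot x$. Since $\vp$ is a limiting Carleman weight, $\{\mathrm{Re}\,p_{0,\vp},\mathrm{Im}\,p_{0,\vp}\}$ vanishes on $p_{0,\vp}^{-1}(0)$, and a positive-commutator/integration-by-parts argument gives $h\|u\|_{H^{2}_{\mathrm{scl}}}\le C\|P_{0,\vp}u\|_{L^{2}}$ for $u\in C_0^\infty(\O)$. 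Extending to $-2\le s\le 0$ by duality (using $P_{0,\vp}^{\,*}=P_{0,-\vp}$ and that $-\vp$ is also a limiting Carleman weight) yields
\[
h\,\|u\|_{H^{s+2}_{\mathrm{scl}}}\le C\,\|P_{0,\vp}u\|_{H^s_{\mathrm{scl}}}.
\]
Applying this base estimate to $P_{0,\vp}^{\,j}u$ at Sobolev index $s+2(m-1-j)$ for $j=0,\dots,m-1$ and telescoping produces the Carleman estimate for the polyharmonic principal part:
\[
h^{m}\,\|u\|_{H^{s+2m}_{\mathrm{scl}}}\le C\,\|P_{0,\vp}^{\,m}u\|_{H^{s}_{\mathrm{scl}}},\qquad -2m\le s\le 0.
\]

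Next, using $e^{\vp/h}(hD_j)e^{-\vp/h}=hD_j+i\partial_j\vp$, I expand the full conjugated operator as
\[
h^{2m}e^{\vp/h}\Lc(x,D)e^{-\vp/h}\;=\;P_{0,\vp}^{\,m}+h^{2m-2}\wt{A}_{\vp}(x,hD)+h^{2m-1}\wt{B}_{\vp}(x,hD)+h^{2m}q,
\]
where $\wt{A}_{\vp}$ and $\wt{B}_{\vp}$ are semiclassical differential operators of orders~$2$ and~$1$ with coefficients (polynomials in $A,B$ and derivatives of $\vp$) bounded uniformly in $h$ in $W^{3,\infty}$ and $W^{2,\infty}$ respectively, hence acting boundedly $H^{s+2}_{\mathrm{scl}}\to H^{s}_{\mathrm{scl}}$ and $H^{s+1}_{\mathrm{scl}}\to H^{s}_{\mathrm{scl}}$. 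The triangle inequality combined with the iterated estimate then gives
\[
h^{m}\|u\|_{H^{s+2m}_{\mathrm{scl}}}\le C\,\|h^{2m}e^{\vp/h}\Lc e^{-\vp/h}u\|_{H^{s}_{\mathrm{scl}}}+C\sum_{k=0}^{2}h^{2m-k}\|u\|_{H^{s+k}_{\mathrm{scl}}}.
\]
Monotonicity of the semiclassical Sobolev scale bounds each error by $Ch^{2m-k}\|u\|_{H^{s+2m}_{\mathrm{scl}}}$; for $m\ge 3$ one has $2m-k\ge m+1$ for $k\in\{0,1,2\}$, so the errors are a factor $h$ smaller than the target and are absorbed for $h$ sufficiently small.

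The main obstacle is the borderline case $m=2$: the second-order perturbation produces an error $Ch^{2}\|u\|_{H^{s+2}_{\mathrm{scl}}}$ at the same $h$-power as the desired left-hand side $h^{2}\|u\|_{H^{s+4}_{\mathrm{scl}}}$, so naive absorption fails. I expect the resolution to exploit the extra regularity $A\in W^{3,\infty}$ through a commutator/integration-by-parts step that pairs the second-order error against $P_{0,\vp}$ and extracts one additional factor of $h$ at the cost of one derivative falling on $A$, thereby strictly lowering the $h$-power of the error and permitting the absorption to close.
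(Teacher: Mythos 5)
Your overall structure---iterate a gain-of-two-derivatives Carleman estimate for $-h^{2}\Delta$ to get one for $(-h^{2}\Delta)^{m}$, expand the conjugated operator, bound the first- and second-order perturbations as $\mathcal{O}(h^{2m-k})\|u\|_{H^{s+2m}_{\mathrm{scl}}}$, and absorb---matches the paper. But you have dropped the one device the paper uses to make the absorption close uniformly for $m\ge 2$: the \emph{convexified} weight $\vp_{\ve}=\vp+\tfrac{h}{2\ve}\vp^{2}$. The base estimate of Kenig--Sj\"ostrand--Uhlmann is
\[
\frac{h}{\sqrt{\ve}}\,\|u\|_{H^{s+2}_{\mathrm{scl}}}\le C\,\|e^{\vp_{\ve}/h}(-h^{2}\Delta)e^{-\vp_{\ve}/h}u\|_{H^{s}_{\mathrm{scl}}},
\]
with $C$ independent of $\ve$; the factor $1/\sqrt{\ve}$ is precisely the currency you spend to absorb errors. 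Iterating gives a left-hand side $(h/\sqrt{\ve})^{m}\|u\|_{H^{s+2m}_{\mathrm{scl}}}$, while the second-order perturbation contributes $\mathcal{O}(h^{2m-2})\|u\|_{H^{s+2m}_{\mathrm{scl}}}$ with an $\ve$-independent constant. The absorption condition becomes $2C\,\ve^{m/2}h^{m-2}\le1$: for $m=2$ one simply fixes $\ve$ small, and for $m\ge 3$ it is automatic once $h\ll\ve$. After absorbing, one passes back to the unconvexified $\vp$ using $1\le e^{\vp^{2}/(2\ve)}\le C_{\ve}$ and $\tfrac12\le 1+\tfrac{h}{\ve}\vp\le\tfrac32$, at the cost only of an $h$-independent constant.

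By contrast, your unconvexified base estimate $h\|u\|_{H^{s+2}_{\mathrm{scl}}}\le C\|P_{0,\vp}u\|_{H^{s}_{\mathrm{scl}}}$, while true for each fixed $\vp$, carries a fixed constant $C$ with no small parameter to trade against. As you correctly observe, for $m=2$ the error $Ch^{2}\|u\|_{H^{s+2}_{\mathrm{scl}}}\le Ch^{2}\|u\|_{H^{s+4}_{\mathrm{scl}}}$ sits at the same power $h^{2}$ as the left side, and there is no way to shrink the constant; the absorption does not close. This is a genuine gap, not a technicality, and the Proposition is claimed for all $m\ge 2$. Your proposed repair---a commutator argument leaning on $A\in W^{3,\infty}$ to extract an extra power of $h$---is speculative, does not correspond to what is actually needed, and is not what the paper does; the $W^{1+|\A|,\infty}$ regularity in the paper is used only to make multiplication by $A_{\A}$ continuous between the relevant semiclassical Sobolev scales when derivatives are commuted past it, not to improve the $h$-power. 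The fix is simply to run the whole argument with $\vp_{\ve}$ (as the paper does), absorb using the $\ve^{-m/2}$ gain, and then unconvexify at the end.
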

\begin{proof}
Let us consider the convexified Carleman weight as in \cite{KEN} defined as 
		\[
		\vp_{\ve}=\vp+ \frac{h}{2\ve} \vp^{2} \mbox{ on } \wt{\O}.
		\]
We begin with the Carleman estimate for the semiclassical Laplacian with a gain of two derivatives proved in \cite{KEN}:
	\begin{equation}\label{carlemanesti}
	\frac{h}{\sqrt{\epsilon}}\lVert u\rVert_{H^{s+2}_{\mathrm{scl}}} \leq C \LM e^{\frac{\vp_{\ve}}{h}}(-h^2\Delta) e^{-\frac{\vp_{\ve}}{h}}u\RM_{H^s_{\mathrm{scl}}}, \mbox{ for all } u\in C^{\infty}_0(\Omega).
	\end{equation}
Let $-2m\leq s\leq 0$, then iterating the above estimate \eqref{carlemanesti} $m$ times, we get the following estimate: 
\begin{equation*}
	\left(\frac{h}{\sqrt{\epsilon}}\right)^m\lVert u\rVert_{H^{s+2m}_{\mathrm{scl}}} \leq C \LM e^{\frac{\vp_{\ve}}{h}}(-h^2\Delta)^m e^{-\frac{\vp_{\ve}}{h}}u\RM_{H^s_{\mathrm{scl}}}, \mbox{ for all } u\in C^{\infty}_0(\Omega).
	\end{equation*}
Let us consider $s \in [-2m,0]$ and we get
\begin{equation}\label{carlemanestimate22}
	\left(\frac{h}{\sqrt{\epsilon}}\right)^m\lVert u\rVert_{H^{s+2m}_{\mathrm{scl}}} \leq C \LM e^{\frac{\vp_{\ve}}{h}}(-h^2\Delta)^m e^{-\frac{\vp_{\ve}}{h}}u\RM_{H^s_{\mathrm{scl}}}, \mbox{ for all } u\in C^{\infty}_0(\Omega).
	\end{equation}

Next to get the required Carleman estimate for $\mathcal{L}(x,D)$, we add the lower order perturbations (given in \eqref{generaloperator}) to the above estimate. Similar literature on convexified Carleman estimates can be found in \cite{DOS} in the context of a magnetic Schr\"{o}dinger operator.

We first add the zeroth order term $(h^{2m}q)$, where $q\in L^{\infty}(\Omega,\mathbb{C})$ in the above estimate. We get 
\[
\lVert h^{2m} qu\rVert _{H^{s}_{\mathrm{scl}}} \leq h^{2m}\lVert q\rVert_{L^{\infty}} \lVert u\rVert_{L^2} \leq h^{2m} \lVert q\rVert_{L^{\infty}}\lVert u\rVert_{H^{s+2m}_{\mathrm{scl}}}.
\]
Next we consider the first order term $h^{2m}A_{\A}D^{\A}$, with $\lvert \A \rvert=1$ (here $A_{\A}\in W^{2,\infty}(\Omega,\Cb^n)$). 
We observe
\begin{equation}\label{First order term}
h^{2m-1}e^{\frac{\vp_{\ve}}{h}}\sum\limits_{\lvert\A\rvert=1}A_{\A}(hD)^{\A} e^{-\frac{\vp_{\ve}}{h}}u
= h^{2m-1}\sum\limits_{\lvert\A\rvert=1}A_{\A}(-D^{\A}\vp_{\ve}+ hD^{\A})u.
\end{equation}
The first term in the right hand side of the above expression can be estimated as 
\begin{equation*} \lVert(A_{\A}D^{\A}\vp_{\epsilon})u\rVert_{H^{s}_{\mathrm{scl}}} \leq \lVert A_{\A} D^{\A}\vp_{\epsilon}\rVert_{L^{\infty}}\lVert u\rVert_{H^{s+2m}_{\mathrm{scl}}}
\end{equation*}
Now as $\vp_{\epsilon} = \vp + \frac{h}{\epsilon}\vp^2 $ and $0 < h \ll \epsilon \ll 1$, that is, $0 < \frac{h}{\epsilon} < 1 $. Hence $ \lVert D^{\A} \vp_{\epsilon}\rVert_{L^{\infty}} = \Oc(1)$ for any $\A$ and consequently we get
\[ \lVert(A_{\A}D^{\A}\vp_{\epsilon})u\rVert_{H^{s}_{\mathrm{scl}}} \leq \mathcal{O}(1)\lVert u\rVert_{H^{s+2m}_{\mathrm{scl}}}.\]
For the second term in the right hand side of \eqref{First order term} we observe that
\begin{equation*}\begin{aligned}
\lVert A_{\A} (hD)^{\A}u\rVert_{H^{s}_{\mathrm{scl}}} &\leq \lVert hD^{\A}(A_{\A}u)\rVert_{H^{s}_{\mathrm{scl}}} + h \lVert (D^{\A}A_{\A})u\rVert_{H^{s}_{\mathrm{scl}}}\\
&\leq \Oc(1)\lVert A_{\A}u\rVert_{H^{s+1}_{\mathrm{scl}}} + \Oc(h)\lVert u\rVert_{H^{s+2m}_{\mathrm{scl}}}\\ 
&\leq \Oc(1)\lVert u\rVert_{H^{s+2m}_{\mathrm{scl}}}.
\end{aligned}\end{equation*}
The last inequality follows from the fact that the operator given as multiplication by $A_{\alpha}$ is continuous from $H^{s+2m}_{\mathrm{scl}}$ to $H^{s+1}_{\mathrm{scl}}$ where $A_\alpha\in W^{1+|\alpha|,\infty}$, $|\A|\geq 1$. 
Therefore for $\lvert \A \rvert = 1$ we have 
\begin{equation}\label{newline1}
\lVert h^{2m-1}e^{\frac{\vp_{\epsilon}}{h}}(A_{\A} hD^{\A}) e^{-\frac{\vp_{\epsilon}}{h}}u\rVert_{H^s_{\mathrm{scl}}} \leq \Oc(h^{2m-1})\lVert u\rVert_{H^{s+2m}_{\mathrm{scl}}}.    
\end{equation}
	
Now consider the term $h^{2m}A_{\A}D^{\A}e^{\varphi_{\epsilon}/h}u$, for $\lvert \A \rvert = 2$.
\begin{align}\label{2nd order term}
&h^{2m-2}e^{\frac{\vp_{\ve}}{h}}\sum\limits_{\lvert\A\rvert=2}A_{\A}h^2D^{\A} e^{-\frac{\vp_{\ve}}{h}}u\\
&=h^{2m-2}e^{\frac{\vp_{\ve}}{h}}\sum\limits_{\substack{\A=\A_1+\A_2,\\\lvert\A_1\rvert=\lvert\A_2\rvert=1}}A_{\A}h^2D^{\A} e^{-\frac{\vp_{\ve}}{h}}u\notag\\
&= h^{2m-2} \sum\limits_{{\lvert\A_1\rvert= \lvert\A_2\rvert=1}}A_{\A}(D^{\A_1}\vp_{\ve}D^{\A_2}\vp_{\ve}-hD^{\A_1+\A_2}\vp_{\ve} + hD^{\A_1}\vp_{\ve}D^{\A_2}+D^{\A_2}\vp_{\ve}hD^{\A_1} + h^2 D^{\A_1+\A_2})u.
\end{align}
For the first two terms in the right hand side of \eqref{2nd order term}, we get
\begin{align*}
\lVert A_\A \lb D^{\A_1}\vp_{\ve}D^{\A_2}\vp_{\ve} -hD^{\A_1+\A_2}\vp_{\ve}\rb u \rVert_{H^s_{\mathrm{scl}}}
&\leq C\lVert A_\A \lb D^{\A_1}\vp_{\ve}D^{\A_2}\vp_{\ve} -hD^{\A_1+\A_2}\vp_{\ve}\rb \rVert_{L^{\infty}}\lVert u\rVert_{H^s_{\mathrm{scl}}}\notag\\
&\leq \mathcal{O}(1)\lVert u\rVert_{H^{s+2m}_{\mathrm{scl}}}. 
\end{align*}
Analyzing the third term in \eqref{2nd order term}, we see
\begin{align*}
\lVert A_\A D^{\A_1}\vp_{\ve}hD^{\A_2}u \rVert_{H^s_{\mathrm{scl}}}
\leq& C\lVert hD^{\A_2}\left(A_\A u D^{\A_1}\vp_{\ve}\right) \rVert_{H^{s}_{\mathrm{scl}}} 
	+ Ch\lVert D^{\A_2}\left(A_{\A}D^{\A_1}\vp_{\ve}\right) \rVert_{L^{\infty}}\lVert u\rVert_{H^{s+2m}_{\mathrm{scl}}} \\
\leq& \mathcal{O}(1)\lVert \left(A_{\A} D^{\A_1}\vp_{\ve}\right) u\rVert_{H^{s+1}_{\mathrm{scl}}}
	+ Ch\lVert u\rVert_{H^{s+2m}_{\mathrm{scl}}}\\
\leq& \mathcal{O}(1)\lVert u\rVert_{H^{s+2m}_{\textrm{scl}}}.
\end{align*}
where, for the first term we use the continuity of the  multiplication operator $A_{\A} : H^{s+2m}_{\mathrm{scl}}\to H^{s+1}_{\mathrm{scl}}$ whenever $A_\alpha\in W^{1+|\alpha|,\infty}$, $|\A|\geq 1$.

Now, consider the last term of the expression on the right hand side of \eqref{2nd order term}, we get 
\begin{align*}
\lVert A_{\A}h^2D^{\A}u \rVert_{H^s_{\mathrm{scl}}} 
&\leq\lVert h^2D^{\A}(A_\A u) \rVert_{H^s_{\mathrm{scl}}} 
+ 2\lVert h^2D^{\A_1}(A_\A) D^{\A_2}u \rVert_{H^s_{\mathrm{scl}}} 
+\lVert h^2D^{\A}(A_\A) u \rVert_{H^{s}_{\mathrm{scl}}}\\
&\leq \mathcal{O}(1)\lVert A_\A u \rVert_{H^{s+2}_{\mathrm{scl}}} + \mathcal{O}(h)\lVert u \rVert_{H^{s+2m}_{\mathrm{scl}}} + \mathcal{O}(h^2)\lVert u \rVert_{H^{s+2m}_{\mathrm{scl}}}\\
&\leq \mathcal{O}(1)\lVert  u \rVert_{H^{s+2m}_{\mathrm{scl}}} + \mathcal{O}(h)\lVert u \rVert_{H^{s+2m}_{\mathrm{scl}}} + \mathcal{O}(h^2)\lVert u \rVert_{H^{s+2m}_{\mathrm{scl}}}.
\end{align*}
Here in the first term we use the continuity of the multiplication operator $A_{\A} : H^{s+2m}_{\mathrm{scl}}\to H^{s+2}_{\mathrm{scl}}$ whenever $A_\alpha\in W^{1+|\alpha|,\infty}$, $|\A|\geq 2$. 
To prove the continuity, it suffices to consider the complex interpolation for the cases $s = 0$ and $s = -2m$. 
The inequality on the second term on the right hand side follows as before by using the continuity of the  multiplication operator $A_{\A} : H^{s+2m}_{\mathrm{scl}}\to H^{s+1}_{\mathrm{scl}}$ whenever $A_\alpha\in W^{1+|\alpha|,\infty}$, $|\A|\geq 1$.  

Then adding all the lower order terms up to order 2 in \eqref{carlemanestimate22} and choosing $h\ll \epsilon \ll 1$ small enough and using the standard bounds i.e. 
$1\leq e^{\frac{\vp^2}{2\epsilon}}\leq C, \hspace{3pt} \frac{1}{2}\leq  1 + \frac{h}{\epsilon}\vp \leq \frac{3}{2}$ 
we get our desired estimate \eqref{carlemanestimate3}.
\end{proof}

Let us denote 
\[
\Lc_{\vp}(x,D) = h^{2m}e^{\frac{\vp}{h}}\Lc(x,D)e^{-\frac{\vp}{h}}.
\]
The formal $L^{2}$ adjoint of $\Lc_{\vp}(x,D)$ would be ${\Lc}^{*}_{\vp}(x,D)=h^{2m}e^{-\frac{\vp}{h}}\Lc^{*}(x,D)e^{\frac{\vp}{h}}$, where $\Lc^{*}(x,D)$ is the formal $L^2$-adjoint of the operator $\Lc(x,D)$. 
As $\Lc^{*}(x,D)$ has the similar form as $\Lc(x,D)$ with the same regularity of the coefficients, since $-\vp$ is also a limiting Carleman weight if $\vp$ is, the Carleman estimate derived in Proposition \ref{Prop: Interior Carleman Estimate} holds for ${\Lc}^{*}_{\vp}(x,D)$ as well.
The following proposition establishes an existence result for an inhomogeneous equation analogous to the results in \cite{KRU2,KRU1}.
\begin{proposition}\label{existence}
Let $A_{\A}\in W^{1+\lvert\A\rvert,\infty}(\Omega,\mathbb{C}^{n^{|\alpha|}})$, $|\A|=1,2$ and  $q\in L^{\infty}(\O,\Cb)$ and $\vp$ be any limiting Carleman weight for the 
semiclassical Laplacian on $\wt{\O}$. 
For $0< h\ll 1$ sufficiently small, the equation 
\begin{equation*}
\Lc_{\vp}(x,D)u = v \mbox{ in }\Omega,
\end{equation*}
has a solution $u\in H^{2}_{\mathrm{scl}}(\Omega)$, for $v\in H^{2-2m}_{\mathrm{scl}}$ 
satisfying, 
\begin{equation*}
{h}^m\lVert u \rVert_{H^{2}_{\mathrm{scl}}} \leq C \lVert v \rVert_{H^{2-2m}_{\mathrm{scl}}}.
\end{equation*}
The constant $C>0$ is independent of $h$ and depends only on $A_\A$, $\A=1,2$ and $q$.
\end{proposition}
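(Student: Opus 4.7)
The plan is to derive this solvability statement from the Carleman estimate of Proposition \ref{Prop: Interior Carleman Estimate} by the standard Hahn--Banach duality argument that is now classical in the CGO literature. The paragraph preceding the proposition already records that, since $-\varphi$ is also a limiting Carleman weight and $\mathcal{L}^*(x,D)$ has the same form as $\mathcal{L}(x,D)$, the same estimate holds for the adjoint $\mathcal{L}^*_{\varphi}$: for every admissible $s\in[-2m,0]$ and every $w\in C^{\infty}_0(\Omega)$,
\[
h^m\|w\|_{H^{s+2m}_{\mathrm{scl}}} \le C\|\mathcal{L}^*_{\varphi}w\|_{H^s_{\mathrm{scl}}}.
\]
The correct choice of $s$ for the present proposition is $s=-2$, which is admissible because $m\ge 2$. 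With this choice the source space $H^{-2}_{\mathrm{scl}}$ is $L^2$-dual to $H^2_{\mathrm{scl}}$ (the space in which we want $u$), while the companion norm $H^{2m-2}_{\mathrm{scl}}$ is $L^2$-dual to $H^{2-2m}_{\mathrm{scl}}$ (the space that contains $v$). This pairing matching is what the argument will exploit.

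Next I would define, on the linear subspace $E := \mathcal{L}^*_{\varphi}\bigl(C^{\infty}_0(\Omega)\bigr)\subset H^{-2}_{\mathrm{scl}}(\Omega)$, the linear functional
\[
L\bigl(\mathcal{L}^*_{\varphi}w\bigr) := \langle w,v\rangle_{\Omega}, \qquad w\in C^{\infty}_0(\Omega),
\]
where $\langle\cdot,\cdot\rangle_{\Omega}$ denotes the duality pairing between $H^{2m-2}_{\mathrm{scl}}$ and $H^{2-2m}_{\mathrm{scl}}$. The adjoint Carleman estimate guarantees that $\mathcal{L}^*_{\varphi}$ is injective on $C^{\infty}_0(\Omega)$, so $L$ is well-defined, and it immediately delivers the bound
\[
|L(\mathcal{L}^*_{\varphi}w)|
\le \|v\|_{H^{2-2m}_{\mathrm{scl}}}\,\|w\|_{H^{2m-2}_{\mathrm{scl}}}
\le \frac{C}{h^m}\,\|v\|_{H^{2-2m}_{\mathrm{scl}}}\,\|\mathcal{L}^*_{\varphi}w\|_{H^{-2}_{\mathrm{scl}}}.
\]
Thus $L$ is a bounded linear functional on $E$ with operator norm at most $Ch^{-m}\|v\|_{H^{2-2m}_{\mathrm{scl}}}$.

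The remainder is routine functional analysis. I extend $L$ by Hahn--Banach to a bounded linear functional on all of $H^{-2}_{\mathrm{scl}}(\Omega)$ with the same norm, and then invoke the Riesz-type identification $\bigl(H^{-2}_{\mathrm{scl}}(\Omega)\bigr)^{*} = H^{2}_{\mathrm{scl}}(\Omega)$ to produce a unique $u\in H^{2}_{\mathrm{scl}}(\Omega)$ realising this extension, with $\|u\|_{H^{2}_{\mathrm{scl}}}\le Ch^{-m}\|v\|_{H^{2-2m}_{\mathrm{scl}}}$. Unwinding the identity $\langle u,\mathcal{L}^*_{\varphi}w\rangle = \langle w,v\rangle$ for every $w\in C^{\infty}_0(\Omega)$ is precisely the distributional equation $\mathcal{L}_{\varphi}u = v$ in $\Omega$, with the claimed norm bound.

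I do not anticipate a serious obstacle here: all substantive estimation was absorbed into Proposition \ref{Prop: Interior Carleman Estimate}, and the transcription via Hahn--Banach is a direct analogue of the arguments in \cite{KRU1,KRU2}. The only bookkeeping point worth mentioning is that one must verify the semiclassical duality $\bigl(H^{s}_{\mathrm{scl}}\bigr)^{*}=H^{-s}_{\mathrm{scl}}$ with a constant independent of $h$; this is immediate from the definition of the norm as $\|\langle hD\rangle^{s}\cdot\|_{L^2}$ together with the self-adjointness of the multiplier $\langle hD\rangle^{s}$.
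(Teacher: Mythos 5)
Your proposal is correct and is precisely the standard Hahn--Banach duality argument that the paper itself invokes by skipping the proof and citing \cite{KRU1,KRU2}: apply the Carleman estimate to the adjoint $\Lc^*_{\varphi}$ with the admissible choice $s=-2$, define the functional $L$ on the range of $\Lc^*_{\varphi}$, extend by Hahn--Banach, and identify the extension with an element of $H^{2}_{\mathrm{scl}}$ via the $L^2$ duality pairing. The only cosmetic point worth flagging is the conjugation bookkeeping in the sesquilinear pairing (one must be consistent about which side of the pairing carries the conjugate so that $L$ is linear and the final identity unwinds to $\Lc_{\varphi}u=v$ rather than to its conjugate), but this is a one-line adjustment that does not affect the validity of the argument.
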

Here we skip the proof of the above proposition as it is standard in the literature of Calder\'{o}n type inverse problems (see \cite{KRU2,KRU1}).

\subsection{Construction of C.G.O. solutions}\label{ccs}

Now we construct complex geometric optics type solutions of the equation $\Lc(x,D)u=0$ based on Proposition \ref{existence}. 
We propose a solution of the form
\begin{equation}\label{cgo} 
u = e^{\frac{(\vp + \I\psi)}{h}}(a_0(x) + ha_1(x)+r(x;h)),
\end{equation}
where $0<h\ll 1$, $\vp(x)$ is a limiting Carleman weight for the semiclassical Laplacian. 
The real valued phase function $\psi$ is chosen such that $\psi$ is smooth near $\overline{\Omega}$ and solves the following Eikonal equation $p_{0,\vp}(x,\nabla\psi) =0$ in $\wt{\O}$. 
The functions
$a_0, a_1$ are the complex amplitudes which solve certain transport equations,  which we will define later. 
The function $r$ is the correction term which satisfies the following estimate $\lVert r\rVert_{H^{2}_{\mathrm{scl}}} = \Oc(h^2)$.

We consider $\varphi$ and $\psi$ to be 
\begin{equation}\label{phi and psi}
\vp(x)=\o \cdot x,\quad 
\psi(x)= \wt{\o} \cdot x,
\end{equation}
where $\o,\wt{\o} \in \Rb^n$ are such that $\o\cdot\wt{\o} = 0$ and $\lvert \wt{\o} \rvert = \lvert \o \rvert$. Observe that $\varphi$ and $\psi$ solves the Eikonal equation $p_{0,\vp}(x,\nabla\psi) =0$ in $\wt{\O}$, that is $\Lm\nabla \vp\Rm = \Lm\nabla \psi\Rm$ and $\nabla\vp\cdot\nabla\psi =0$.

\begin{proposition}\label{solvibility}
Let us consider the equation
\begin{equation}\label{L-sharp}
\Lc(x,D)u =  (-\Delta)^m u + \sum_{\lvert\A\rvert = 1,2} {A}_{\A} D^{\A}u + qu =0,
\end{equation}
where $A_\alpha\in W^{1+\lvert\A\rvert,\infty}(\Omega,\mathbb{C}^{n^{|\alpha|}})$, $|\A|=1,2$ and  $q\in L^{\infty}(\Omega,\mathbb{C})$. We assume that for $m=2$, $A_\alpha$ with $|\alpha|=2$ is an isotropic matrix.
Then for all $0< h \ll 1$, there exists a solution $u\in H^{2}(\O)$ of \eqref{L-sharp} of the form
\begin{equation}\label{qw10} 
u(x,h) = e^{\frac{(\vp(x) + i\psi(x))}{h}}(a_0(x)+ha_1(x) + r(x;h)) 
\end{equation}
where $\vp$ and $\psi$ are as in \eqref{phi and psi}. Here $a_0$, $a_1$ are complex amplitudes satisfying certain transport equations and $r\in H^2(\Omega)$ satisfies the estimate $\lVert r\rVert_{H^{2}_{\mathrm{scl}}} = \Oc(h^2)$.
\end{proposition}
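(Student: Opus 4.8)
The plan is to construct the CGO solution \eqref{qw10} by the standard WKB-type ansatz, determining the amplitudes $a_0, a_1$ from transport equations and then solving for the correction term $r$ using the existence result in Proposition \ref{existence}. First I would plug the ansatz $u = e^{(\vp + i\psi)/h}(a_0 + h a_1 + r)$ into \eqref{L-sharp} and conjugate: since $e^{-(\vp+i\psi)/h}\,h^{2m}(-\Delta)^m\,e^{(\vp+i\psi)/h}$ is a polynomial in $h$ with principal symbol $\bigl(p_{0,\vp}(x,\nabla\psi + i\,?)\bigr)$-type expressions, the key simplification is that because $\vp,\psi$ are the \emph{linear} weights in \eqref{phi and psi} with $|\nabla\vp| = |\nabla\psi|$ and $\nabla\vp\cdot\nabla\psi = 0$, the phase $\vp + i\psi$ is such that $|\nabla(\vp+i\psi)|^2 = 0$ (the complex gradient is isotropic). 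Hence $h^{2m}e^{-(\vp+i\psi)/h}(-\Delta)^m e^{(\vp+i\psi)/h}$ has vanishing $h^0$-term, and the leading contribution is of order $h$, given by a first-order transport operator $T = 2(\nabla\vp + i\nabla\psi)\cdot\nabla + (\text{const})$ composed in a way reflecting the $m$-fold iteration — explicitly something like $c\,(\,(\nabla\vp+i\nabla\psi)\cdot\nabla\,)^{?}$ acting on $a_0$. Collecting powers of $h$: the $O(h)$ equation forces a transport equation $T a_0 = 0$ (note the second-order term $A_\A D^\A$ with $|\A|=2$ contributes at order $h^{2m-2}$ after multiplication by $h^{2m}$, hence at order $h^0$ in the conjugated equation only when $m=2$ — this is exactly why the $m=2$ case needs the isotropy hypothesis, so that the symbol $\sum A_{\A}(\nabla\vp+i\nabla\psi)^{\A} = a(x)\,|\nabla\vp+i\nabla\psi|^2 = 0$ vanishes and does not obstruct solving the transport equation). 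Similarly the next order gives an inhomogeneous transport equation $T a_1 = (\text{lower order terms involving } a_0)$.

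Next I would solve these transport equations. Since $\vp + i\psi = (\o + i\wt\o)\cdot x$ with $\o + i\wt\o$ a fixed nonzero complex vector spanning, together with its conjugate, a two-dimensional real plane, the operator $(\nabla\vp + i\nabla\psi)\cdot\nabla$ is (up to constants) a $\bar\partial$-type operator in the complex variable $z = (\o + i\wt\o)\cdot x/|\o|^2$ (or a constant-coefficient $\partial_z$), so one can solve $Ta_0 = 0$ by taking $a_0$ holomorphic in that variable — in particular one has freedom to prescribe $a_0$ as a smooth compactly-supported-in-the-transverse-variables times an exponential $e^{i x\cdot\zeta}$ factor, which is what makes the later Fourier argument in Section \ref{determination} work. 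Then $a_1$ is obtained by a further application of the inverse of the (non-vanishing) $\bar\partial$-operator via the Cauchy transform, using that $a_0$ and the coefficients $A_\A, q$ are compactly supported so the right-hand side lies in a suitable space; one checks $a_0, a_1 \in H^{2}(\Omega)$ (indeed smoother away from where regularity of $A_\A$ bites, but $W^{1+|\A|,\infty}$ regularity of $A_\A$ suffices to keep $a_1$ in $H^2$).

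Finally, with $a_0, a_1$ fixed so that the terms of order $h^0$ and $h^1$ in $e^{-(\vp+i\psi)/h} h^{2m}\Lc(x,D) e^{(\vp+i\psi)/h}(a_0 + ha_1)$ cancel, the remainder equation for $r$ reads $\Lc_\vp(x,D)(e^{i\psi/h}r) = e^{i\psi/h} v$ with $\|v\|_{H^{2-2m}_{\mathrm{scl}}} = \Oc(h^2)$ — here one must track that every leftover term (the $O(h^2)$ and higher terms from $(-\Delta)^m$, the $O(h)$ and $O(h^2)$ remainders from the first- and second-order perturbations as estimated already inside the proof of Proposition \ref{Prop: Interior Carleman Estimate}, and the $O(1)$ contribution of $q$ times $h^2 a_1$-type terms) is indeed $\Oc(h^2)$ in $H^{2-2m}_{\mathrm{scl}}$. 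Applying Proposition \ref{existence} (with the weight $\vp$, absorbing $e^{i\psi/h}$ since $\psi$ is real and bounded with bounded derivatives) produces $r$ with $h^m\|r\|_{H^2_{\mathrm{scl}}} \leq C\|v\|_{H^{2-2m}_{\mathrm{scl}}} = \Oc(h^2)$, hence $\|r\|_{H^2_{\mathrm{scl}}} = \Oc(h^{2-m})$; to reach the claimed $\Oc(h^2)$ one reorganizes the ansatz to include more corrector terms $h^2 a_2 + \cdots + h^{N}a_N$ (solving further transport equations) so that $v = \Oc(h^{N})$ with $N$ large enough, or — more cleanly — one simply states the estimate with the $h$-power that actually follows and notes it suffices; I will follow the paper's convention and push the expansion far enough that $\|r\|_{H^2_{\mathrm{scl}}} = \Oc(h^2)$. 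The main obstacle is the bookkeeping at order $h^0$ in the conjugated equation: verifying that the full symbol of $h^{2m}e^{-(\vp+i\psi)/h}(-\Delta)^m e^{(\vp+i\psi)/h}$ has no $h^0$ term (relying on the isotropy $|\nabla\vp + i\nabla\psi|^2 = 0$) and that the second-order perturbation's leading contribution either lands at a harmless order (when $m > 2$) or is killed by the isotropy assumption (when $m = 2$); getting this algebra exactly right, and identifying the precise transport operators, is the crux of the argument.
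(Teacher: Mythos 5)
Your overall strategy (WKB ansatz, transport equations for $a_0,a_1$, then solving for $r$ via Proposition \ref{existence}) is the same as the paper's, but there is a genuine error in the power counting in $h$, and it propagates to the point where your argument does not actually deliver $\lVert r\rVert_{H^2_{\mathrm{scl}}}=\Oc(h^2)$. The conjugated principal part is
$h^{2m}e^{-(\vp+i\psi)/h}(-\Delta)^m e^{(\vp+i\psi)/h}=(-h^2\Delta-2hT)^m$ with $T=(\nabla\vp+i\nabla\psi)\cdot\nabla$; since $(\nabla\vp+i\nabla\psi)\cdot(\nabla\vp+i\nabla\psi)=0$ the $h^0$ term indeed vanishes, but the lowest surviving order is $h^m$ (coming from $(-2hT)^m$), not $h^1$ as you assert. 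Consequently the transport equation for $a_0$ is the $m$-th order equation $T^m a_0=0$ obtained from the coefficient of $h^m$, and the equation for $a_1$ comes from the coefficient of $h^{m+1}$. Your remark about the $|\A|=2$ perturbation is also off by the same shift: after multiplication by $h^{2m}$ it contributes at order $h^{2m-2}$, which coincides with the leading order $h^m$ exactly when $m=2$ — that is why isotropy is needed there to keep the $a_0$ equation homogeneous — not ``at order $h^0$''.

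Because of this miscount, your remainder bound is wrong where it matters. Once the $h^m$ and $h^{m+1}$ coefficients are annihilated by the two transport equations, the residual $v=-e^{-(\vp+i\psi)/h}h^{2m}\Lc(x,D)\bigl(e^{(\vp+i\psi)/h}(a_0+ha_1)\bigr)$ is $\Oc(h^{m+2})$ in $H^{2-2m}_{\mathrm{scl}}$, not $\Oc(h^2)$; feeding this into $h^m\lVert r\rVert_{H^2_{\mathrm{scl}}}\leq C\lVert v\rVert_{H^{2-2m}_{\mathrm{scl}}}$ gives exactly $\lVert r\rVert_{H^2_{\mathrm{scl}}}=\Oc(h^2)$ with only the two amplitudes $a_0,a_1$. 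Your computation instead lands at $\Oc(h^{2-m})$, which is useless for $m\geq 2$, and your proposed repair (appending correctors $h^2a_2+\cdots+h^Na_N$) is left unexecuted — you would need to exhibit and solve the additional transport equations, which you do not do. The fix is simply to redo the bookkeeping with the correct leading order $h^m$; then no extra correctors are needed and the claimed estimate follows at once.
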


\bpr
Let us write $T= \left[(\nabla\vp +\I  \nabla\psi)\cdot \nabla\right]$ and consider the conjugated operator
\begin{align}\label{ConjugatedOperator}
&e^{-\frac{(\vp+ \I\psi)}{h}}h^{2m}\Lc(x,D)e^{\frac{(\vp + \I\psi)}{h}}u\notag\\
&\qquad\qquad= (-h^2\Delta -2hT)^mu\notag\\
&\qquad\qquad\quad+ \sum_{\substack{\A=\A_1+\A_2,\\\lvert\A_1\rvert =\lvert\A_2\rvert =1}}h^{2m-2} A_{\A}\left( D^{\A_1}(\varphi+i\psi)D^{\A_2}(\varphi+i\psi) + 2h D^{\A_1}(\varphi+i\psi)D^{\A_2} + h^2D^{\A} \right)u\notag \\
&\qquad\qquad\quad + \sum_{\lvert\A\rvert = 1} h^{2m-1} A_{\A}\left( D^{\A}(\varphi + i\psi) + hD^{\A}\right)u + h^{2m}qu.
\end{align}

\noindent Let us substitute the form of $u$ in \eqref{cgo} in the Equation \ref{ConjugatedOperator}.
We would like to get $e^{-\frac{(\vp+ i\psi)}{h}}h^{2m}\Lc(x,D)\left(e^{\frac{(\vp + i\psi)}{h}}r(x,h)\right)$ is of $\mathcal{O}(h^{m+2})$. 
In order to obtain that it is enough to consider the coefficient of $h^m$ and $h^{m+1}$ in terms of the complex amplitudes $a_0$, $a_1$ in the above expression \eqref{ConjugatedOperator} and equate them to zero, which will give two transport equations.
Let us begin with considering the coefficient of $h^m$. We get the following transport equation for $a_0$ as 
\begin{equation}\label{transportequation1}
T^{m}a_0 = 0 \quad\mbox{in }\Omega, \quad\mbox{whenever }m>2,
\end{equation} 
and
\begin{equation}\label{m=2}
\left((-2T)^m  +  \sum_{\substack{\A=\A_1+\A_2,\\\lvert\A_1\rvert =\lvert\A_2\rvert =1}}A_\alpha D^{\A_1}(\varphi+i\psi)\cdot D^{\A_2}(\varphi+i\psi)\right) a_0 =0 \quad\mbox{in }\Omega, \quad\mbox{whenever }m=2.
\end{equation}

As we have assumed that $A_\alpha$ with $|\A|=2$ is an isotropic matrix for $m=2$, so the term  $\sum_{\A=\A_1+\A_2,\lvert\A_1\rvert =\lvert\A_2\rvert =1}A_\alpha D^{\A_1}(\varphi+i\psi)\cdot D^{\A_2}(\varphi+i\psi)$ becomes zero in \eqref{m=2}. It follows from the fact that if $A_\alpha=mI$ then  \[\sum_{\substack{\A=\A_1+\A_2,\\ \lvert\A_1\rvert =\lvert\A_2\rvert =1}}A_\alpha D^{\A_1}(\varphi+i\psi)\cdot D^{\A_2}(\varphi+i\psi)= m ({\o}+{i}\wt{\o})\cdot({\o}+{i}\wt{\o})=0.\]
 
Hence, we get \eqref{transportequation1} as the transport equation of $a_0$ for all $m\geq 2$. The solution of this homogeneous transport equation $T^{m}a_0 = 0$ in $\Omega$ for $m\geq 2$ always exists and can be taken in $C^\infty(\overline{\O})$ class, see \cite{KRU2,KRU1}. 

Next we consider the coefficient of $h^{m+1}$ and obtain the following transport equation of $a_1$ as
\begin{align}\label{qw11}
(-2T)^ma_1 =& -\sum_{k=0}^{m-1}\lb (-2T)^{k}\circ (-\Delta)\circ (-2T)^{m-1-k}\rb a_0 \notag\\
&+ \sum_{\substack{\A=\A_1+\A_2\\ \lvert\A_1\rvert =\lvert\A_2\rvert =1}} A_{\A}\, D^{\A_1}(\varphi+i\psi)D^{\A_2}(\varphi+i\psi)\, a_0 \quad\mbox{in }\Omega,
\quad\mbox{whenever }  m>2
\end{align}
and for $m=2$,
\begin{align}\label{qw12}
(-2T)^ma_1 =& -\sum_{k=0}^{m-1}\lb (-2T)^{k}\circ (-\Delta)\circ (-2T)^{m-1-k}\rb a_0\notag\\
& + 2\sum_{\substack{\A=\A_1+\A_2\\ \lvert\A_1\rvert =\lvert\A_2\rvert =1}}A_\A D^{\A_1}(\varphi+i\psi)D^{\A_2}a_0 + \sum_{\lvert\A\rvert = 1} A_{\A}D^{\A}(\varphi + i\psi)\, a_0 \quad\mbox{in }\Omega.
\end{align}
The above inhomogeneous transport equations are solvable and we solve them in $W^{2,\infty}(\Omega)$ to ensure $u$ in \eqref{qw10} is indeed in $H^{2}(\Omega)$.   
In order to show the solvability of \eqref{qw11}, \eqref{qw12} in $W^{2,\infty}(\Omega)$ one can break it into a system of $m\geq 2$ linear equations as follows.
\begin{align*}
&\mbox{Given }
f\in W^{2,\infty}(\Omega),
\mbox{ find }
v_1\in W^{2,\infty}({\Omega}) \mbox{ solving }Tv_1 =f \mbox{ in }\Omega;\\
&\mbox{Given }
v_1\in W^{2,\infty}({\Omega})
,
\mbox{ find }
v_2\in W^{2,\infty}({\Omega})
\mbox{ solving }
Tv_2 =v_1 \mbox{ in }\Omega;\\
&\mbox{Proceeding as before, given }
v_{m-1}\in W^{2,\infty}({\Omega}) \mbox{ find } a_1\in W^{2,\infty}({\Omega})
\mbox{ solving } Ta_1 =v_{m-1} \mbox{ in }\Omega.
\end{align*}

The solvability of the linear inhomogeneous equation
\[Tg=\left(\omega+i\wt{\omega}\right)\cdot\nabla g=f\quad\mbox{ in }\Omega\] for $\omega\perp\wt{\omega}=0$ and $\lvert \omega\rvert =\lvert\wt{\omega}\rvert=1$ is well known, see \cite{SALOT}. 
Let $f\in W^{k,\infty}({\Omega},\mathbb{C})$, $k\geq 0$ then we get a solution $g\in W^{k,\infty}({\Omega},\mathbb{C})$ as
\[ g(x) = \int_{\mathbb{R}^2} \frac{\widetilde{f}(x-\omega y_1-\wt{\omega}y_2)}{y_1+iy_2}dy_1dy_2\]
where $\widetilde{f}\in W^{k,\infty}(\mathbb{R}^n;\mathbb{C})$ with $\mathrm{supp}\,(\wt{f})$ is a compact set containing $\Omega$ and $\wt{f}=f$ in $\Omega$.

We considered $a_0$ in $C^\infty(\overline{\Omega})$ and for $a_1$ to be in at least in $W^{2,\infty}({\Omega})$ we require $A_\A \in W^{2,\infty}({\Omega})$ for $|\A|=1,2$ whenever $m\geq2$. 
As we have assumed $A_\A\in W^{1+|\A|,\infty}(\Omega)$ for $\A=1,2$ hence it justifies $a_1\in W^{2,\infty}(\O)$ and hence it is in $H^2(\Omega)$.

The remainder term $r(x,h)$ satisfies
\begin{equation}\label{correctorequation}\begin{aligned}
e^{-\frac{(\vp+ i\psi)}{h}}h^{2m}\Lc(x,D)\left(e^{\frac{(\vp + i\psi)}{h}}r(x,h)\right) &= -e^{-\frac{(\vp+ i\psi)}{h}}h^{2m}\Lc(x,D)\left(e^{\frac{(\vp + i\psi)}{h}}(a_0+ha_1)\right) \\
&=\Oc(h^{m+2}).
\end{aligned}\end{equation}
By choosing $a_0\in C^{\infty}(\overline{\Omega})$ and $a_1\in W^{2,\infty}(\Omega)$ the reminder term $r$ solves
\begin{equation*}
e^{-\frac{\vp}{h}}h^{2m}\Lc(x,D)e^{\frac{\vp}{h}}(e^{\frac{i\psi}{h}}r(x;h)) = \Oc(h^{m+2}) \quad \mbox{in }H^{2-2m}_{scl}(\Omega),
\end{equation*}
which is solvable thanks to the Proposition \ref{existence}.
We get a solution $r\in H^{2}(\Omega)$ satisfying the estimate $\lVert r\rVert_{H^{2}_{\mathrm{scl}}} = \Oc(h^2)$. 
\epr

\section{Determination of the coefficients}\label{determination}
Our first step is the standard method of extending the problem to a larger simply connected domain. Similar to \cite{KRU1,SU} we have the following result. 
\begin{proposition}\label{qw20}
Let $\Omega \ssubset \wt{\O}$ be two bounded domains in $\mathbb{R}^n$ with smooth
boundaries, and let $A_\A,\, \wt{A}_\A \in W^{1+|\A|,\infty}(\wt{\O},\mathbb{C}^{n^{|\A|}})$ for $|\A|=1,2$, and  $q,\, \wt{q} \in L^\infty(\O,\mathbb{C})$ satisfy 
$A_\A = \wt{A}_\A$ for $|\A|=1,2$ and $q = \wt{q}$ in $\wt{\O}\setminus\O$. If the Cauchy data set (cf. \eqref{qw19}) $\mathcal{C}_{A_\A,q}(\O) = \mathcal{C}_{\wt{A}_\A,\wt{q}}(\O)$, then $\mathcal{C}_{A_\A,q}(\wt{\O}) = \mathcal{C}_{\wt{A}_\A,\wt{q}}(\wt{\O})$.
\end{proposition}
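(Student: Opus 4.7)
The plan is to follow the standard domain-extension argument of Sylvester--Uhlmann, as adapted to polyharmonic operators in \cite{KRU1,SU}. Since $A_\A = \wt{A}_\A$ and $q=\wt{q}$ on $\wt{\O}\setminus\O$, the operators $\Lc$ and $\wt{\Lc}$ coincide on the annular region $\wt{\O}\setminus\O$. The goal is to show that given any solution $\wt{u}\in H^{2m}(\wt{\O})$ of $\Lc(x,D)\wt{u}=0$, one can construct $V\in H^{2m}(\wt{\O})$ solving $\wt{\Lc}(x,D)V=0$ whose Navier Cauchy data on $\PD\wt{\O}$ coincides with that of $\wt{u}$. By the symmetric argument this gives $\Cc_{A_\A,q}(\wt{\O}) = \Cc_{\wt{A}_\A,\wt{q}}(\wt{\O})$.

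To construct $V$, I would read off the Navier Cauchy data $(\gamma\wt{u},\gamma^\#\wt{u})|_{\PD\O}$ of $\wt{u}|_\O$, which lies in $\Cc_{A_\A,q}(\O)$. By the hypothesis this data also lies in $\Cc_{\wt{A}_\A,\wt{q}}(\O)$, so there exists $v\in H^{2m}(\O)$ satisfying $\wt{\Lc}(x,D)v=0$ in $\O$ with $(\gamma v,\gamma^\# v) = (\gamma\wt{u},\gamma^\#\wt{u})$ on $\PD\O$. Define
\[
V(x) = \begin{cases} v(x), & x\in \O, \\ \wt{u}(x), & x\in \wt{\O}\setminus\overline{\O}. \end{cases}
\]

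The principal technical obstacle is verifying $V\in H^{2m}(\wt{\O})$, because matching the $2m$ Navier-type traces is not a priori the same as matching the $2m$ Dirichlet traces $\PD_\nu^k u|_{\PD\O}$, $k=0,\ldots,2m-1$, which is the right condition for $H^{2m}$-regularity across the interface $\PD\O$. I would resolve this by working in boundary normal (Fermi) coordinates near $\PD\O$ and writing
\[
\Delta = \PD_\nu^2 + H(x)\PD_\nu + \Delta_{\PD\O} + \mbox{(lower order curvature terms)}.
\]
Matching of $u|_{\PD\O}$ and $\PD_\nu u|_{\PD\O}$ is immediate from the Navier Cauchy data; then matching of $(-\Delta)u|_{\PD\O}$ together with the already-matched tangential derivatives allows one to solve algebraically for $\PD_\nu^2 u|_{\PD\O}$, and matching of $\PD_\nu(-\Delta)u|_{\PD\O}$ then yields $\PD_\nu^3 u|_{\PD\O}$. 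Iterating this procedure through all the pairs $\bigl((-\Delta)^k u,\,\PD_\nu(-\Delta)^k u\bigr)|_{\PD\O}$ for $k=0,\ldots,m-1$ recovers agreement of every Dirichlet trace up to order $2m-1$, so $V\in H^{2m}(\wt{\O})$.

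Finally, on each of the two open pieces $\O$ and $\wt{\O}\setminus\overline{\O}$ the function $V$ satisfies $\wt{\Lc}V=0$ by construction (using $\wt{\Lc}=\Lc$ outside $\O$), and the $H^{2m}$ regularity across $\PD\O$ eliminates any distributional jump, so $\wt{\Lc}(x,D)V=0$ in $\wt{\O}$. Since $V\equiv\wt{u}$ on $\wt{\O}\setminus\overline{\O}$, their Navier Cauchy data on $\PD\wt{\O}$ agree, completing the inclusion $\Cc_{A_\A,q}(\wt{\O})\subseteq\Cc_{\wt{A}_\A,\wt{q}}(\wt{\O})$. The reverse inclusion follows by interchanging the roles of the two operators.
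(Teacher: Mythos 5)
Your argument is correct and is essentially the standard domain-extension argument of Sylvester--Uhlmann and Krupchyk--Lassas--Uhlmann, which is exactly what the paper invokes (the paper itself omits the proof and defers to \cite{KRU1,SU}). The one genuine technical point — that matching the $2m$ Navier traces $\bigl((-\Delta)^k u,\PD_\nu(-\Delta)^k u\bigr)|_{\PD\O}$ implies matching of the $2m$ Dirichlet traces $\PD_\nu^j u|_{\PD\O}$, $j=0,\dots,2m-1$, so that the glued function lands in $H^{2m}(\wt{\O})$ and no distributional jump appears across $\PD\O$ — is correctly identified and correctly resolved by your boundary-normal-coordinate induction.
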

The proof of the above proposition follows from a standard analysis technique, which is given in \cite{KRU1,SU}.

We recall that 
$A_\A,\wt{A}_\A 
\in W^{1+\lvert\A\rvert,\infty}(\mathbb{R}^n,\mathbb{C}^{n^{|\alpha|}}) \cap \mathcal{E}^{\prime}(\overline{\Omega})$ for $|\A|=1,2$ and $q,\wt{q}\in L^\infty(\O)$.
Let $\wt{\Omega}$ be a smooth simply connected domain in $\mathbb{R}^n$ such that  $\O\ssubset\wt{\O}$. We extend all the lower order perturbed coefficients $A_\A,\wt{A}_\A,q,\wt{q}$ by zero to $\wt{\O}\setminus\O$, and denote these extensions by the same letters. Now using the Proposition \ref{qw20} we get  $\mathcal{C}_{A_\A,\, q}(\wt{\O}) = \mathcal{C}_{\wt{A}_\A,\,\wt{q}}(\wt{\O})$.
Therefore now we can study the problem in a bigger domain $\wt{\Omega}$ which is simply connected. 

\subsection{Integral identity involving the coefficients $A_{\A}$, $q$}
We recall that
\[\Lc(x,D) \equiv 
 (-\Delta)^m + \sum\limits_{\lvert \A \rvert = 1}^{2} A_{\A}(x)D^{\A} + q(x),
\]
where $A_{\A}\in W^{1+\lvert \A \rvert,\infty}(\wt{\Omega},\mathbb{C}^{n^{|\alpha|}})$ for $\lvert\A\rvert = 1,2$ and $q \in L^{\infty}(\wt{\Omega},\mathbb{C})$.
We write the formal $L^{2}$ adjoint of this operator, $\Lc^{*}(x,D)$ is of the form
\begin{equation}\begin{aligned}\label{adjoint-operator}
\Lc^{*}(x,D)\equiv 
 (-\Delta)^m + \sum\limits_{\lvert \A \rvert = 1}^{2}
 A^{\sharp}_{\A}(x)D^{\A} + q^{\sharp}(x),
\end{aligned}\end{equation}
where 
\[\begin{cases}
A^{\sharp}_{\A}(x) &= A_{\A}(x), \quad \mbox{for } \lvert \A \rvert = 2,\\ 
A^{\sharp}_{\A}(x) &= \sum_{\lvert\B\rvert=2} D^{\B-\A}\overline{A}_{\B}(x),  \quad \mbox{for } \lvert \A \rvert = 1\\
q^{\sharp}(x) &= \sum_{\lvert\B\rvert = 1}^{2} D^{\A}\overline{A}_{\B}(x) + \overline{q}(x).
\end{cases}\]

\noindent
We have the following integral identity 
\begin{equation}\label{integralidentity}
\int_{\wt{\Omega}} \left(\Lc(x,D)u\right)\overline{v} \D x - \int_{\wt{\Omega}} u\overline{\Lc^{*}(x,D)v}\D x = 0, \quad \forall u\in H^{2m}_0(\wt{\O}), v\in H^{2m}(\wt{\O}).
\end{equation}
where $H^{2m}_0(\wt{\Omega})$ is the closure of $C_0^\infty(\wt{\Omega})$ functions in $H^{2m}(\wt{\Omega})$ norm.

\noindent
Let $m\geq 2$ and $u,\wt{u}\in H^{2m}(\wt{\Omega})$ solves
\begin{equation}\begin{aligned}
\Lc(x,D)u &=0\quad\mbox{in }\wt{\Omega}\qquad \quad
\mbox{and}\quad
\widetilde{\Lc}(x,D)\widetilde{u} =0 \quad\mbox{in }\wt{\Omega},\\
\mbox{with }\quad (-\Delta)^l u|_{\partial\wt{\Omega}} &= (-\Delta)^l \widetilde{u}|_{\partial\wt{\Omega}},\quad \mbox{for }l=0,1,\ldots,(m-1).
\end{aligned}
\end{equation}
From the assumption of the Theorem \ref{mainresult} and Proposition \ref{qw20} on $\partial\wt{\O}$ we now get 
\begin{equation}
\partial_\nu(-\Delta )^{l}u = \partial_\nu(-\Delta )^{l}\widetilde{u},\qquad\mbox{for }l=0,2,\ldots,(m-1).
\end{equation}
So we have $(u-\widetilde{u})\in H^{2m}_0(\wt{\Omega})$.
Therefore
\begin{equation}\label{differenceEven}
\Lc(x,D)(u-\widetilde{u})= \sum_{1\leq\lvert \A \rvert \leq2}(A_{\A}-{\wt{A}_\A})D^{\A}\widetilde{u} +  (q-\widetilde{q})\widetilde{u}.
\end{equation}
Let $v\in H^{2m}(\wt{\Omega})$ satisfies $\Lc^{*}(x,D)v = 0 $ in $\wt{\Omega}$,
then from the integral identity \eqref{integralidentity} we get
\begin{equation}\begin{aligned}\label{integralidentityE}
\int_{\wt{\Omega}}\lb \sum_{1\leq\lvert \A \rvert \leq 2}(A_{\A}-{\wt{A}_\A})D^{\A}\widetilde{u} + (q-\widetilde{q})\widetilde{u}\rb\overline{v}\,  \D x
= 0
\end{aligned}\end{equation}
Next we choose $\wt{u}$ and $v$ to be the CGO type solutions constructed in the previous subsection \ref{ccs}. 
We choose $\varphi=\omega\cdot x$ and $\psi=\widetilde{\omega}\cdot x$ for $\wt{u}$ and $\varphi=-\omega\cdot x$ and $\psi=\wt{\omega}\cdot x$ for $v$, where  
$\o,\wt{\o} \in \mathbb{R}^n$ satisfying $|\o|=|\wt{\o}|=1$ and $\o\cdot\wt{\o}=0$. For $h>0$ small enough, we set the solutions are of the form
\begin{equation}
\label{cgo2}
\begin{cases}
\widetilde{u}(x)=&e^{\frac{\omega\cdot x+ i\wt{\omega}\cdot x}{h}} \lb\wt{a_0}(x,\o+i\wt{\o})+ h\wt{a_1}(x,\o+i\wt{\o})+\widetilde{r}(x,\omega+ i\wt{\omega};h)\rb\quad\mbox{in }\wt{\O},\\
v(x)=&e^{\frac{-\omega\cdot x+i\wt{\omega}\cdot x}{h}} \lb a_0(x,-\o+i\wt{\o})+ha_1(x,-\o+i\wt{\o})+ r(x,-\omega+i\wt{\omega};h)\rb\quad\mbox{in }\wt{\O}.
\end{cases}
\end{equation}
The amplitudes $\wt{a_0}(\cdot,\o+i\wt{\o})$, ${a}_0(\cdot,-\o+i\wt{\o})\in C^\infty(\overline{\wt{\O}})$ satisfy the transport equations 
\begin{equation}
\label{eq_transp_new_1}
\begin{cases}
\lb(\omega+i\wt{\omega})\cdot \nabla\rb^m\, \wt{a_0}(x,\omega+i\wt{\omega})=0\quad \textrm{in}\ \wt{\O},\\   
\lb(-\omega+i\wt{\omega})\cdot \nabla\rb^m\, a_0(x,-\omega+i\wt{\omega})=0\quad \textrm{in}\ \wt{\O},
\end{cases}
 \quad m\geq 2
\end{equation}
and 
\begin{equation}
\label{eq_remainder_r_j}
\|\wt{r}\|_{H^{2}_{\textrm{scl}}},\, \|r\|_{H^{2}_{\textrm{scl}}}=\mathcal{O}(h^2).
\end{equation}

Now substituting \eqref{cgo2} in \eqref{integralidentityE} we get
\begin{equation}\label{cgoIdentityE}
\begin{aligned}
0=&\sum_{\substack{\A=\A_1+\A_2,\\ \lvert\A_1\rvert =\lvert\A_2\rvert= 1}}\int_{\wt{\Omega}} (A_{\A}-{\wt{A}_\A})\frac{(\o + i\wt{\o})_{\A_1}}{h}\frac{(\o + i\wt{\o})_{\A_2}}{h}
\lb \wt{a_0} +h\wt{a_1} + \wt{r}\rb\overline{\lb {a}_0 + ha_1 + {r}\rb }\ \D x\\ 
&+\sum_{\substack{\A=\A_1+\A_2,\\ \lvert\A_1\rvert =\lvert\A_2\rvert= 1}}\int_{\wt{\Omega}} (A_{\A}-{\wt{A}_\A})\frac{(\o + i\wt{\o})_{\A_1}}{h}
D^{\A_2}\lb \wt{a_0} + h\wt{a_1} + \wt{r}\rb\overline{\lb {a}_0 +ha_1+ {r}\rb }\ \D x\\
&+\sum_{\lvert\A\rvert =2}\int_{\wt{\Omega}} (A_{\A}-{\wt{A}_\A})
D^{\A}\lb \wt{a_0}+h\wt{a_1} + \wt{r}\rb\overline{\lb {a}_0 +ha_1 + {r}\rb }\ \D x\\
&+\sum_{\Lm \A \Rm = 1} \int_{\wt{\Omega}} (A_{\A}-{\wt{A}_\A})\frac{(\o + i\wt{\o})_{\A}}{h}
\lb \wt{a_0}+h\wt{a_1} + \wt{r}\rb\overline{\lb {a}_0 + ha_1 + {r}\rb }\ \D x\\
&+\sum_{\Lm \A \Rm = 1} \int_{\wt{\Omega}} (A_{\A}-{\wt{A}_\A})D^\alpha
\lb \wt{a_0} +h\wt{a_1} + \wt{r}\rb\overline{\lb {a}_0 +ha_1 + {r}\rb }\ \D x \\
&+\int_{\wt{\Omega}} (q-\widetilde{q})\lb \wt{a_0} +h\wt{a_1} +
\wt{r}\rb \overline{\lb {a}_0 + ha_1+ {r}\rb}\ \D x.
\end{aligned}
\end{equation}

Now we consider following two cases for the different forms of the second order perturbation.\\
\textbf{Case(1):}
Let us assume that $m>2$ and $(A_\A-\wt{A}_\A)$ for $|\A|=2$ is not an isotropic matrix. 
Then multiplying \eqref{cgoIdentityE} by $h^{2}$ and letting $h\to 0$ we get
\begin{equation}\label{limitingidentityE}
\begin{aligned}
\sum_{\substack{\A=\A_1+\A_2,\\ \lvert\A_1\rvert =\lvert\A_2\rvert= 1}}\int_{\wt{\Omega}} (A_{\A}-{\wt{A}_\A})(\o + i\wt{\o})_{\A_1}(\o + i\wt{\o})_{\A_2}
\,\wt{a_0}\,\overline{{a}_0}\ \D x=0.
\end{aligned}
\end{equation}
This follows from the fact that $A_\A, {\wt{A}_\A}\in L^\infty(\wt{\Omega})$, $\wt{a_0},\, a_0\in C^\infty(\overline{\wt{\O}})$ and  $\wt{a_1},\, a_1\in W^{2,\infty}(\wt{\Omega})$. 
Note that here we have crucially used the fact that $\|\wt{r}\|_{H^{2}_{\textrm{scl}}},\, \|r\|_{H^{2}_{\textrm{scl}}}
=\mathcal{O}(h^2)$ to obtain $\|\wt{r}\|_{L^2},\, \|r\|_{L^2} =\mathcal{O}(h^2)$, $\|D^{\beta}\wt{r}\|_{L^2}=\mathcal{O}(h)$, for $|\beta|=1$ and $\| D^{\A}\wt{r}\|_{L^2}=\mathcal{O}(1)$, for $|\A|=2$.
\vspace{5mm}

\noindent\textbf{Case(2):} 
Let us assume that $(A_\A-\wt{A}_\A)$ for $|\A|=2$ is isotropic. 
Observe that the first term in \eqref{cgoIdentityE} is always zero. 
Hence, multiplying \eqref{cgoIdentityE} by $h$ and letting $h\to 0$, we get 
\begin{equation}\label{iso}
\begin{aligned}
\sum_{\substack{\A=\A_1+\A_2,\\ \lvert\A_1\rvert =\lvert\A_2\rvert= 1}}\int_{\wt{\Omega}} (A_{\A}-{\wt{A}_\A})(\o + i\wt{\o})_{\A_1} D^{\A_2}\wt{a_0}\ \overline{{a}_0}\ \D x 
+\sum_{\lvert\A\rvert = 1}\int_{\wt{\Omega}} (A_{\A}-{\wt{A}_\A})(\o + i\wt{\o})_{\A} \wt{a_0}\,\overline{{a}_0}\ \D x =0,
\end{aligned}
\end{equation}
It follows from the fact that $A_\A, {\wt{A}_\A}\in L^\infty(\wt{\Omega})$, $\wt{a_0},\, a_0\in C^\infty(\overline{\O})$ and  $\wt{a_1},\, a_1\in W^{2,\infty}(\wt{\Omega})$.
Note that here we use $\|\wt{r}\|_{L^2},\, \|r\|_{L^2} =\mathcal{O}(1)$, $\|hD^{\beta}\wt{r}\|_{L^2}=\mathcal{O}(h)$ with $|\beta|=1$, and $\|h^j D^{\A}\wt{r}\|_{L^2}=\mathcal{O}(h^j)$ with $|\A|=2$, $j=1,2$, specially in the third term of \eqref{cgoIdentityE}.

\subsection{Determination of the coefficients} In this section we will establish the equality of $A_\A =\wt{A}_\A$ for $|\A|=1,2$ and $q=\wt{q}$ in $\O$ from the integral identities \eqref{limitingidentityE} and \eqref{iso}. 
\bpr[Proof of Theorem \ref{mainresult}]
Let us begin with the integral identity \eqref{limitingidentityE} in the \textbf{Case (1)} mentioned above.

In \cite{Sharafutdinov}, it is proved that a symmetric $2$-form $(A_\A-\wt{A}_\A)\in W^{3,\infty}(\wt{\Omega},\mathbb{C}^{n^2})$ can be uniquely decomposed in an $1$-form $V\in W^{4,\infty}(\wt{\Omega},\mathbb{C}^n)$ with $V = 0$ on $\partial\wt{\Omega}$ and a symmetric $2$-form $F\in W^{3,\infty}(\wt{\Omega},\mathbb{C}^{n^2})$ so that 
\begin{equation}\label{qw4}
(A_\A-\wt{A}_\A) = F + d_sV, \quad  \mbox{in } \wt{\Omega}.
\end{equation}
Here $F$ is divergence free, i.e.
\[(\delta F)_j = \sum_{k=1}^n\frac{\partial F_{jk}}{\partial x_k} = 0, \quad\mbox{in }\wt{\O},\quad j=1,2,..,n\]   and 
$d_s$ is the symmetrized differentiation defined as 
\[(d_s V)_{jk}=\frac{1}{2}\lb \frac{\partial V_j}{\partial x_k} +  \frac{\partial V_k}{\partial x_j} \rb, \quad\mbox{in }\wt{\O}.\]  
The above expression \eqref{qw4} can be realized in the following manner. Let us take any vector field $B\in W^{k,\infty}(\wt{\Omega})$, $k\geq 1$, and consider the following Laplace equation $-\Delta \varphi = div\, B $ in $\wt{\Omega}$. We can get a unique solution $\varphi \in W^{k+1,\infty}(\wt{\Omega})$ subject to suitable Dirichlet  or Neumann  boundary condition. 
For example, we get a unique $\varphi\in W^{3,\infty}(\wt{\Omega})$ by solving
\[ (-\Delta)\varphi = div\, B \mbox{ in }\wt{\Omega}, \quad\mbox{with } \varphi = 0 \mbox{ on }\partial\wt{\Omega}.\]
Then we can write $B= \wt{B} + \nabla\varphi$ in $\wt{\Omega}$, where $\wt{B} =(B-\nabla\varphi)$ with $div\, \wt{B} =0$ in $\wt{\Omega}$, and $\varphi =0 $ on $\partial\wt{\Omega}$.
Applying the above technique for each row (or column) vector of the matrix $(A_\alpha-\wt{A}_\alpha)$ and using symmetrization one can obtain the decomposition in \eqref{qw4}.

Using the decomposition \eqref{qw4} in \eqref{limitingidentityE} we get 
\begin{equation}\label{integral_id_2}
\sum_{\substack{\A=\A_1+\A_2,\\ \lvert\A_1\rvert =\lvert\A_2\rvert= 1}}\int_{\wt{\Omega}} (F + d_sV)_{\A}(\o + i\wt{\o})_{\A_1}(\o + i\wt{\o})_{\A_2}\, \wt{a_0}\overline{a_0}\, \D x = 0.
\end{equation}
We choose ${a}_0=1,\wt{a_0}=e^{-ix\cdot\xi}$, where $\xi \in \Rb^n$ is perpendicular to $\o$ and $\wt{\o}$. Then $a_0,\wt{a_0}$ are indeed solutions of the transport equations \eqref{eq_transp_new_1} 
and we get 
\begin{equation*}
\sum_{\substack{\A=\A_1+\A_2,\\ \lvert\A_1\rvert =\lvert\A_2\rvert= 1}}\int_{\wt{\Omega}} (F + d_sV)_{\A}(\o + i\wt{\o})_{\A_1}(\o + i\wt{\o})_{\A_2}\, e^{-ix\cdot\xi}\, \D x = 0.
\end{equation*}
Since, $V|_{\partial \wt{\Omega}} = 0$ and $\xi \cdot \omega = \xi \cdot \wt{\omega} = 0$ we get 
\begin{equation*}
\begin{aligned}
\int_{\wt{\Omega}} (d_sV)_{\A}(\o + i\wt{\o})_{\A_1}(\o + i\wt{\o})_{\A_2}\, e^{-ix\cdot\xi}\, \D x
= \left((\o + i\wt{\o})\cdot \xi\right)\int_{\wt{\Omega}} \left(V \cdot (\o + i\wt{\o})\right)\, e^{-ix\cdot\xi}\, \D x = 0.
\end{aligned}
\end{equation*}
Therefore,
\begin{align}\label{n1}
\sum_{\substack{\A=\A_1+\A_2,\\ \lvert\A_1\rvert =\lvert\A_2\rvert= 1}}\int_{\wt{\Omega}} F_{\A}(\o + i\wt{\o})_{\A_1}(\o + i\wt{\o})_{\A_2}\, e^{-ix\cdot\xi}\, \D x = 0.
\end{align}
Now extending $F$ by $0$ outside $\wt{\Omega}$ and denoting the extended function by the same notation $F \in L^2(\mathbb{R}^n)$ we get
\begin{align} \label{equation B}
\sum_{j,k = 1}^{n} \wh{F}_{jk}(\xi)(\o + i\wt{\o})_{j}(\o + i\wt{\o})_{k} = 0.
\end{align}
Let us now fix $\xi \in \Rb^n \setminus\{ 0 \}$. 
Consider an orthonormal basis $\{ \mu_1, \mu_2, \dots, \mu_n \}$ of $\Rb^n$ so that $\mu_n=\frac{\xi}{\lvert \xi \rvert}$.
Choose $\o=\mu_p$ and $\wt{\o}=\mu_q$ for $p\neq q$ and $1 \leq p,q \leq (n-1)$. 
Observe that the construction of the solutions $\wt{u}$ and $v$ allows us to make these choices of $\o$ and $\wt{\o}$. 
In fact, without loss of generality, the relation in \eqref{equation B} is true whenever $\xi,\o,\wt{\o} \in \mathbb{R}^n$ such that $\{\xi,\o,\wt{\o}\}$ are mutually perpendicular and $\lvert \o \rvert  = \lvert \wt{\o} \rvert = 1$. 

Now, equation \eqref{equation B} implies
\begin{align}\label{step 1}
\sum_{j,k = 1}^{n} \wh{F}_{jk}(\xi)\lb(\mu_p)_{j} ((\mu_1)_{k} - (\mu_q)_{j}(\mu_2)_{k}) + 2i((\mu_p)_{j}(\mu_q)_{k})\rb = 0.
\end{align}
Choosing $-\mu_q$ in place of $\mu_q$ we get an another {\color{blue}orthonormal } basis of $\mathbb{R}^n$ and consequently we find
\begin{align}\label{step 2}
\sum_{j,k = 1}^{n} \wh{F}_{jk}(\xi)\lb((\mu_p)_{j} (\mu_p)_{k} - (\mu_q)_{j}(\mu_q)_{k}) - 2i((\mu_p)_{j}(\mu_q)_{k})\rb = 0.
\end{align}
Adding and subtracting equations \eqref{step 1} and \eqref{step 2} for $1 \leq p,q\leq n-1$, $p\neq q$ we obtain
\begin{equation}\label{conditions for F}
\langle \wh{F}(\xi)\mu_p,\mu_p\rangle=\langle \wh{F}(\xi)\mu_q,\mu_q\rangle\quad\mbox{ and }\quad
\langle \wh{F}(\xi)\mu_p,\mu_q\rangle=0=\langle \wh{F}(\xi)\mu_q,\mu_p\rangle.
\end{equation}
Since $F$ is divergence free, we also have 
\begin{equation}\label{divfree}
\sum_{k =1}^{n}\wh{F}_{jk}(\xi)\xi_{k} = 0\quad\mbox{for each }j=1,\ldots,n.
\end{equation}
Using the symmetry of $\wh{F}(\xi)$ we obtain $\langle \wh{F}(\xi){\mu_p},{\xi}\rangle = 0$ for any $p=1,\ldots,n-1$. 
This leads to have the following representation of the element $\wh{F}(\xi)\mu_p $ with respect to orthonormal basis $\{\mu_1,\ldots,\mu_{n-1},\frac{\xi}{\lvert \xi \rvert}\}$ of $\mathbb{R}^n$ as 
\begin{equation*}
\wh{F}(\xi){\mu_p} = \sum_{j=1}^{n-1} d^{(p)}_{j}(\xi){\mu_j},\quad p=1,\ldots,n-1,
\end{equation*}
for some  $d^{(p)}_j(\xi)\in\mathbb{C}$, $j=1,\ldots,n-1$.
Using the second relation in \eqref{conditions for F} we see $d^{(p)}_j(\xi) = 0$ whenever $j\neq p$. Hence,
\begin{equation*}
\wh{F}(\xi){\mu_p} = d^{(p)}_{p}(\xi){\mu_p}, \quad p=1,\ldots,n-1.
\end{equation*}
Now, the first relation in \eqref{conditions for F} implies 
\begin{align*}
d^{(p)}_{p}(\xi) = \langle \wh{F}(\xi){\mu_p},{\mu_p}\rangle = \langle \wh{F}(\xi){\mu_q},{\mu_q}\rangle = d^{(q)}_{q}(\xi), \quad \forall p,q =1,\ldots,(n-1).
\end{align*}
Hence, we have $\wh{F}(\xi){\mu_p} = d(\xi){\mu_p}$, where 
\begin{equation}\label{n2}
d(\xi) = \langle \wh{F}(\xi) \mu_p, \mu_p \rangle = \int_{\mathbb{R}^n} \langle F(x) \mu_p, \mu_p \rangle e^{-ix.\xi} \,dx, \quad \mbox{for all } 1\leq p\leq n-1.
\end{equation}
Therefore, $\wh{F}(\xi)$ has eigenvalue $d(\xi)$ of multiplicity $(n-1)$ corresponding to the eigenvectors $\mu_1, \mu_2, \dots, \mu_{n-1}$ and eigenvalue $0$ corresponding to the eigenvector $\xi$.
Let us now define the orthonormal matrix
\begin{equation*}
P^{t} = \left( \mu_1 \quad \mu_2 \quad \dots \quad \mu_{n-1} \quad \frac{\xi}{|\xi|} \right)
\end{equation*}
and observe that $\wh{F} = P^{t}DP$, where $P^{t}$ is the transpose of the matrix $P$ and $D(\xi)$ is the diagonal matrix given as $D(\xi)=diag\, (\underbrace{d(\xi),\ldots,d(\xi)}_{n-1},0)$. 

This leads to provide a formal expression of $\wh{F}(\xi)$ for any $\xi \neq 0$ as
\begin{equation}\label{n3}
\wh{F_{jk}}(\xi) 
= \sum_{l,m =1}^{n} P_{lj}(\xi)D_{lm}(\xi)P_{mk}(\xi) 
= \sum_{l=1}^{n-1} d(\xi) P_{lj}P_{lk}
= d(\xi) (\delta_{jk} - P_{nj}P_{nk} )
= d(\xi) \left(\delta_{jk} - \frac{\xi_j \xi_k}{\lvert \xi \rvert^2} \right).
\end{equation}
In other words
\[ \widehat{F}(\xi) = d(\xi)\left(I- \frac{\xi\otimes\xi}{|\xi|^2}\right),\quad\xi\in \mathbb{R}^n\setminus \{0\}\]
and $\widehat{F}(0)$ can be taken as $\int_{\widetilde{\Omega}}F(x)\, dx$.

Consequently, in the physical space $x \in \mathbb{R}^n$ we have
\begin{equation}\label{nF}
F_{jk}(x) = d_{\#}(x) \delta_{jk} + R_j R_k (d_{\#}(x)),
\end{equation} 
where $d_{\#}\in L^2(\mathbb{R}^n)$ with $\wh{d_{\#}}(\xi) = d(\xi)$ and $R_j$ is the classical Riesz transformation defined as $\wh{R_j f}(\xi) = \frac{1}{i}\frac{\xi_j}{\lvert \xi \rvert}\wh{f}(\xi)$, for $f \in L^2(\mathbb{R}^n)$. 

Since $F_{jk}\in L^2(\mathbb{R}^n)$ supported inside $\wt{\Omega}$, therefore $\mathrm{trace}[F(x)] = 0 $ in $\mathbb{R}^n\setminus\wt{\Omega}$ . 
As $\sum_{j=1}^n R_j^2= -I$, from \eqref{nF} we get $\mathrm{trace}[F(x)] = (n-1)d_{\#}(x)$, therefore we derive $d_{\#}(x)= 0 $ in $\mathbb{R}^n\setminus \wt{\Omega}$.

Having $d_{\#}\in L^2(\mathbb{R}^n)$ with its compact support, let us consider $\widetilde{d}\in H^2(\mathbb{R}^n)$ solving uniquely 
\[ -\Delta \widetilde{d} = d_{\#} \quad\mbox{in }\mathbb{R}^n.\]
Then by using a standard property of the Riesz transform given as $R_jR_k (-\Delta) = \frac{\partial^2}{\partial x_j\partial x_k}$, from \eqref{nF} we get
\begin{equation}\label{kh1}
F_{jk}(x) = d_{\#}(x)\delta_{jk} + \frac{1}{2}\left[\frac{\partial}{\partial x_j}\left(\frac{\partial \widetilde{d}}{\partial x_k}\right) +\frac{\partial}{\partial x_k}\left(\frac{\partial \widetilde{d}}{\partial x_j}\right)\right] \quad\mbox{in }\mathbb{R}^n. 
\end{equation}
The above expression shows that $F_{jk}\in L^2(\mathbb{R}^n)$ can be written as a sum of two $L^2(\mathbb{R}^n)$ functions. 

As a next step, we show $\widetilde{d}= 0$ in $\mathbb{R}^n\setminus\wt{\Omega}$.
Since we got $d_{\#}(x)= 0 $ in $\mathbb{R}^n\setminus \wt{\Omega}$. 
Then from \eqref{kh1} we get $\frac{\partial^2 }{\partial x_j\partial x_k}\wt{d}=0$ in $\mathbb{R}^n\setminus\wt{\Omega}$ for all $j,k=1,..,n$. 
 This implies $\widetilde{d}$ is some linear function in $x$ in the domain $\mathbb{R}^n\setminus\wt{\Omega}$. However, as $\wt{d}\in H^2(\mathbb{R}^n)$ i.e.  having $L^2(\mathbb{R}^n\setminus\wt{\Omega})$-integrability of $\widetilde{d}$ in the unbounded (in all direction) domain $\mathbb{R}^n\setminus\wt{\Omega}$ that linear function in $x$ in $\mathbb{R}^n\setminus\wt{\Omega}$ could only be zero function, i.e. $\widetilde{d}=0$ in $\mathbb{R}^n\setminus \wt{\Omega}$.
Therefore, $\widetilde{d}\in H^2_0(\wt{\Omega})$. 

So our decomposition in \eqref{qw4} is now modified into
\begin{equation}\label{kh2}
\lb A-\wt{A}\rb_{jk} = (F+d_sV)_{jk}=d_{\#}(x)\delta_{jk} + \frac{1}{2}\left[\frac{\partial}{\partial x_j}\left(\frac{\partial \widetilde{d}}{\partial x_k} + V_k\right)+ \frac{\partial}{\partial x_k}\left(\frac{\partial \widetilde{d}}{\partial x_j}+ V_j\right)\right]  \quad\mbox{in }\wt{\Omega}.
\end{equation}
We denote $\widetilde{V} =\left(\nabla \widetilde{d} + V\right)\in H^1(\widetilde{\Omega})$ and observe that $\widetilde{V}|_{\partial\widetilde{\Omega}}=0$, since $\wt{d}\in H^2_0(\wt{\Omega})$ and $V|_{\partial\widetilde{\Omega}}=0$.

Next we show that $\widetilde{V}=0$ in $\wt{\Omega}$. For that we consider
\eqref{integral_id_2} with the form of $(F+d_sV)$ given in \eqref{kh2} and obtain
\begin{equation}\label{qw5}
\sum_{j,k=1}^n\int_{\wt{\Omega}} \frac{1}{2}\lb\frac{\partial \widetilde{V}_j}{\partial x_k} + \frac{\partial \widetilde{V}_k}{\partial x_j}\rb (\o + i\wt{\o})_{j}(\o + i\wt{\o})_{k}
\,\wt{a_0}\,\overline{{a_0}}\ \D x=0.
\end{equation}
We consider $\wt{a_0}= e^{-ix\cdot \xi}$ and  $a_0=(\omega\cdot x)$  in $\wt{\Omega}$, with  $\omega\cdot\o =1$ and $\o\cdot\wt{\o}=\o\cdot\xi=0=\wt{\o}\cdot\xi$, which are indeed the solutions of the transport equations \eqref{eq_transp_new_1}. 
By integration by parts with using $\wt{V}=0$ on $\partial\wt{\Omega}$, from \eqref{qw5} we obtain 
\begin{equation}\label{kh5}
\sum_{l=1}^n\int_{\wt{\Omega}} \wt{V}_l(x)\,(\o + i\wt{\o})_{l}\, e^{-ix\cdot\xi}\, \D x=0.
\end{equation}
Then following \cite{SUN,KRU1}, from \eqref{kh5} one can directly conclude that
\[ \partial_j \wt{V}_k - \partial_k \wt{V}_j = 0 \quad\mbox{in }\wt{\Omega}, \ \forall j,k=1,\ldots,n.\]
Since $\wt{\Omega}$ is simply connected, thus 
$\wt{V}=\nabla p$ for some scalar function $p\in H^{2}(\wt{\Omega})$.

Since $\wt{V}=0$ on $\partial\wt{\Omega}$, we infer $\nabla_{tan}\, p|_{\partial\wt{\Omega}}=\left(\nabla p -(\partial_{\nu}p)\nu\right)|_{\partial\Omega}=0$ and consequently get that $p$ is constant ($p|_{\partial\wt{\Omega}}=c\in\mathbb{C}$) on the boundary.
Here $\nabla_{tan}$ denotes the tangential component of $\nabla$ along the boundary $\partial \wt{\O}$.
Hence, by considering $(p-c)$ in place of $p$, we assume that $p=0$ on $\partial\wt{\Omega}$. 
Therefore, from \eqref{qw5} we get $p=\partial_\nu p=0$ on $\partial \wt{\Omega}$ and consequently we see
\begin{equation}\label{qw6}
0=\sum_{j,k=1}^n\int_{\wt{\Omega}} \frac{\partial^2 p}{\partial x_j\partial x_k} (\o + i\wt{\o})_{j}(\o + i\wt{\o})_{k} \,\wt{a_0}\,\overline{{a_0}}\ \D x
=\int_{\wt{\Omega}} p(x) \left((\o + i\wt{\o})\cdot \nabla\right)^2\left(\wt{a_0}\overline{{a_0}}\right)\ \D x.
\end{equation}
Consider $\wt{a_0}= e^{-ix\cdot \xi}$ and $a_0=(\omega\cdot x)^2$ in $\wt{\Omega}$ with $\omega\cdot\o =1$ and $\o\cdot\wt{\o}=\o\cdot\xi=0$, which  are indeed solutions of the transport equations \eqref{eq_transp_new_1} for $m>2$.
Therefore we get
\[ \int_{\wt{\Omega}} p(x)\, e^{-ix\cdot\xi}\, \D x= 0.\]
Hence, $p=0$ in $\wt{\Omega}$ and so, $\wt{V}=0$ in $\wt{\Omega}$.

This shows that  $(A_\A - \wt{A}_\A)$ (see \eqref{kh2}) is an isotropic matrix given as 
\[(A - \wt{A})_\A = d_{\#}I\quad \mbox{in }\wt{\Omega}\mbox{ for }|\A|=2.\]
Hence we fall into the regime of \textbf{Case (2)}. 

Next, we turn back to the \textbf{Case (2)} when $(A_\A-\wt{A}_\A)$ is isotropic for $|\A|=2$. In that case we have the integral identity  \eqref{iso}. 
First we show that $(A_\A-\wt{A}_\A)=0$ in $\wt{\Omega}$ for $|\A|=1$ and then we move into proving $(A_\A-\wt{A}_\A)=0$ in ${\Omega}$ for $|\A|=2$.  

Let us consider $\wt{a_0}= 1$ and  $a_0=e^{ix\cdot \xi}$  in $\wt{\Omega}$, which are indeed the solutions of the transport equations \eqref{eq_transp_new_1}. Plugging in the functions $a_0$ and $\wt{a_0}$ in \eqref{iso} we get 
\begin{equation}\label{qw8}\sum_{\lvert\A\rvert = 1}\int_{\wt{\Omega}} (A_{\A}-{\wt{A}_\A})(\o + i\wt{\o})_{\A}\,
e^{-ix\cdot\xi}\ \D x =0.
\end{equation}
This implies $(A_{\A}-{\wt{A}_\A})=\nabla \widetilde{p}$, $|\alpha|=1$, for some scalar function $\wt{p}\in W^{3,\infty}(\wt{\Omega})$.
Moreover, as $(A_\A-\wt{A}_\A)=0$ in a neighborhood of $\partial\wt{\Omega}$, hence we conclude that $\wt{p}$ is a constant ($\wt{p}|_{\partial\wt{\Omega}}=\wt{c}\in\mathbb{C})$ on $\partial\wt{\Omega}$. Hence by considering $(\wt{p}-\wt{c})$, we assume that $\wt{p}=0$ on $\partial\wt{\Omega}$. Next by putting this in \eqref{iso} we get 
\begin{equation}\label{qw7}
\int_{\wt{\Omega}} \nabla \wt{p}\cdot (\o + i\wt{\o})\, \wt{a}\, \overline{a_0}\, \D x =0.
\end{equation}
Now in \eqref{qw7} we consider $\wt{a_0}= 1$ and $a=e^{ix\cdot \xi}(\o\cdot x)$ in $\wt{\Omega}$, which are the solution of the transport equations \eqref{eq_transp_new_1}. 
Using integration by parts with the fact that $\wt{p}=0$ on $\partial\wt{\Omega}$, we obtain 
\[ \int_{\wt{\Omega}} \wt{p}(x)\, e^{-ix\cdot\xi}\, \D x= 0.\]
Therefore $\wt{p}=0$ in $\wt{\Omega}$ and hence $(A_\A-\wt{A}_\A)=0$ in ${\Omega}$, for $|\A|=1$. 

Next in order to show the same for $|\A|=2$ with $A_{\A}$ and $\wt{A}_{\A}$ being isotropic matrices, we go back to \eqref{iso} and put $A_\A=\wt{A}_\A$ for $|\A|=1$ in $\wt{\Omega}$. 
Taking $\wt{a_0}=(\o\cdot x)$ and $a_0=e^{ix\cdot\xi}$ in $\wt{\Omega}$ we get 
\[ \int_{\wt{\Omega}} m(x)\, e^{-ix\cdot\xi}\, \D x = 0, \] where $A_{\A}-\wt{A}_{\A} = m(x)I$.
Thus $m=0$ in $\wt{\Omega}$, consequently $(A_\A-\wt{A}_\A) =0 $ for $|\A|=2$ in $\wt{\Omega}$. So in the \textbf{Case(2)} we have shown $A_\A=\wt{A}_\A$ for $\A=1,2$ in $\Omega$.\\ 
\\
Now we go back to \textbf{Case (1)} again and by the above analysis we get $A_\A=\wt{A}_\A$ for $|\A|=2$ in $\wt{\Omega}$. 
Now we will show that for \textbf{Case (1)}, $A_\A=\wt{A}_\A$ for $|\A|=1$ in $\wt{\Omega}$.
In order to do that, we go back to identity \eqref{cgoIdentityE} and put $A_\A=\wt{A}_\A$ for $|\A|=2$ in $\wt{\Omega}$. 
By doing that we get first three terms in the identity becomes zero. 
Now by multiplying both sides of the identity \eqref{cgoIdentityE} by $h$ and letting $h\to 0$ we get
\[\sum_{\lvert\A\rvert = 1}\int_{\wt{\Omega}} (A_{\A}-{\wt{A}_\A})(\o + i\wt{\o})_{\A}\,\wt{a_0}
\, \overline{a_0}\, \D x =0.
\]
Here we consider $\wt{a_0}= e^{-ix\cdot\xi}$ and $a_0=1$ in $\wt{\Omega}$ and end up with having \eqref{qw8}. 
Hence we get $A_\A=\wt{A}_\A$ for $|\A|=1$ in $\wt{\Omega}$.   

Hence, in both cases we have $A_\A= \wt{A}_\A$ for $|\A|=1,2$ in $\wt{\Omega}$. 
The remaining part is to show the uniqueness of the zeroth order perturbation, that is $(q-\wt{q})=0$ in $\wt{\Omega}$. 
Putting $A_\A=\wt{A}_\A$ for $|\A|=1,2$ in $\wt{\Omega}$ in the identity \eqref{cgoIdentityE} we get the first five terms are zero. 
Next, by letting $h\to 0$ we obtain 
\[
\int_{\wt{\Omega}} (q -\widetilde{q})\,\wt{a_0}\,\overline{a_0}\, \D x =0.
\]
By considering $\wt{a_0}= e^{-ix\cdot\xi}$ and $a_0=1$ in $\wt{\Omega}$, we end up with having 
\[
\int_{\wt{\Omega}} (q -\widetilde{q})e^{-ix\cdot\xi}\, \D x =0, 
\]
which proves $q=\wt{q}$ in $\wt{\Omega}$.
This completes the  proof of the  Theorem \ref{mainresult}.
\epr

\textbf{Acknowledgment:}
{\small
The authors express their gratitude to the anonymous referees for their valuable comments and suggestions on this article.
}

\end{document}